\newtheorem{Thm}{Theorem}[section]
\newtheorem{Lem}[Thm]{Lemma}
\newtheorem{Fac}[Thm]{Fact}
\newtheorem{Cor}[Thm]{Corollary}
\newtheorem{Prop}[Thm]{Proposition}
\newtheorem{Conj}[Thm]{Conjecture}
\theoremstyle{remark}
\newtheorem{Rem}[Thm]{Remark}
\newtheorem{Ex}[Thm]{Example}
\theoremstyle{definition}
\newcommand{\lct}{\mathop{\mathrm{lct}}\nolimits}
\newcommand{\Aut}{\mathop{\mathrm{Aut}}\nolimits}
\newcommand{\SO}{\mathop{\mathrm{SO}}\nolimits}
\newcommand{\SL}{\mathop{\mathrm{SL}}\nolimits}
\newcommand{\PGL}{\mathop{\mathrm{PGL}}\nolimits}
\newcommand{\Supp}{\mathop{\mathrm{Supp}}\nolimits}
\newcommand{\DF}{\mathop{\mathrm{DF}}\nolimits}
\newcommand{\Sesh}{\mathop{\mathrm{Sesh}}\nolimits}
\begin{document}

\title[K-stability of $\mathbb{Q}$-Fano varieties ]
{Alpha invariant and K-stability of $\mathbb{Q}$-Fano varieties}
\author{Yuji Odaka}
\address{Research Institute for Mathematical Sciences, , Kyoto University, Kyoto 606-8502 Japan}
\email{yodaka@kurims.kyoto-u.ac.jp}

\author{Yuji Sano}
\address{Graduate School of Science and Technology, Kumamoto University,  2-39-1, Kurokami, Kumamoto, 860-8555, Japan
\\
Tel.: 0011-81-96-342-3333 $\quad$ Fax: 0011-81-96-342-3341
}
\email{sano@sci.kumamoto-u.ac.jp}
\maketitle

\begin{abstract}
We give a purely algebro-geometric proof that if the $\alpha$-invariant of a $\mathbb{Q}$-Fano variety $X$ is greater than $\dim X/(\dim X+1)$, then 
$(X,\mathcal{O}_{X}(-K_X))$ is K-stable. 
The key of our proof is a relation among the Seshadri constants, the $\alpha$-invariant and K-stability. It also gives applications concerning the automorphism group. 
\end{abstract}


\section{Introduction }
The $\alpha$-invariant is introduced by Tian \cite{Tia87} to give a numerical criterion for the existence of K\"ahler-Einstein metrics on Fano manifolds.
On the other hands, it is conjectured that the existence of K\"ahler-Einstein metrics would be equivalent to K-stability of manifolds which is a certain version of stability notion of Geometric Invariant Theory. 
The purpose of this paper is to study a direct relation between the $\alpha$-invariant and K-stability from algebro-geometric viewpoint and give some applications.

Let $X$ be an $n$-dimensional smooth Fano manifold. 
We take into account a compact sub Lie group $G$ (possibly trivial) of the holomorphic automorphism group $\Aut(X)$. 
Let $\omega$ be a fixed $G$-invariant K\"ahler form with K\"ahler class $c_{1}(X)$. 
Let $P_G(X,\omega)$ be the set of K\"ahler potentials defined by
$$
	P_G(X,\omega)=\{\varphi\in C^2_\mathbb{R}(X) \mid G\mbox{-invariant},\,\sup\varphi=0,\,\omega+\frac{\sqrt{-1}}{2\pi}\partial\bar\partial \varphi >0\}.
$$
Tian \cite{Tia87} introduced the invariant
$$
	\alpha_G(X)=\sup\{\alpha>0\mid \exists C(\alpha)\,\,\mbox{s.t.} \int_Xe^{-\alpha\varphi}\omega^n <C(\alpha) \,\,\mbox{for all } \varphi\in P_G(X,\omega)\}.
$$
This is independent of the choice of $\omega$.
If $G$ is trivial, we denote it by just $\alpha(X)$. 
Then, he proved 
\begin{Fac}[Tian \cite{Tia87}]\label{fact:tian}
If $\alpha_G(X)>\frac{n}{n+1}$, then $X$ admits a K\"ahler-Einstein metric. 
\end{Fac}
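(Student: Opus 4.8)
The plan is to run Aubin's continuity method and to feed the hypothesis $\alpha_G(X)>\tfrac{n}{n+1}$ into the zeroth--order a priori estimate; that is the only place the invariant is used. For $t\in[0,1]$ one seeks a $G$-invariant K\"ahler potential $\varphi_t$ solving the complex Monge--Amp\`ere equation
\[
\Big(\omega+\tfrac{\sqrt{-1}}{2\pi}\partial\bar\partial\varphi_t\Big)^{\!n}=e^{h_\omega-t\varphi_t+c_t}\,\omega^n,
\]
where $h_\omega$ is the Ricci potential of $\omega$ (so that $\tfrac{\sqrt{-1}}{2\pi}\partial\bar\partial h_\omega=\mathrm{Ric}(\omega)-\omega$), the constant $c_t$ is fixed by normalising $\sup_X\varphi_t=0$, and $\omega_{\varphi_t}:=\omega+\tfrac{\sqrt{-1}}{2\pi}\partial\bar\partial\varphi_t$. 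Solving this at parameter $t$ is equivalent to $\mathrm{Ric}(\omega_{\varphi_t})=t\,\omega_{\varphi_t}+(1-t)\,\omega$, so a solution at $t=1$ is precisely a K\"ahler--Einstein metric in $c_1(X)$. I would then show that the set $S\subset[0,1]$ of parameters admitting a ($G$-invariant) solution is non-empty, open and closed.

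Non-emptiness and openness are the standard part. At $t=0$ the equation is $\omega_{\varphi_0}^{n}=e^{h_\omega}\omega^n$ (up to the normalising constant), solvable by Yau's theorem on the Calabi conjecture; the solution is $G$-invariant because the equation is $G$-equivariant and, for $t<1$, the solution is unique by the maximum principle. Openness follows from the implicit function theorem in H\"older spaces: the linearisation at $\varphi_t$ is, up to normalisation, $u\mapsto\Delta_{\omega_{\varphi_t}}u+t\,u$, which is invertible because the lowest nonzero eigenvalue of $-\Delta_{\omega_{\varphi_t}}$ exceeds $t$ by the Lichnerowicz--Obata estimate applied to $\mathrm{Ric}(\omega_{\varphi_t})=t\,\omega_{\varphi_t}+(1-t)\,\omega>t\,\omega_{\varphi_t}$. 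For closedness one needs a priori bounds uniform on each $[0,t_0]$ with $t_0<1$; by Yau's a priori estimates for the complex Monge--Amp\`ere equation (the second--order/Laplacian estimate, Calabi's third--order estimate, and Schauder bootstrapping) everything reduces to a uniform oscillation bound $\sup_X\varphi_t-\inf_X\varphi_t\le C$.

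This oscillation estimate is the heart of the matter. Since $\sup_X\varphi_t=0$, we have $\varphi_t\in P_G(X,\omega)$, so for any fixed $\alpha$ with $\tfrac{n}{n+1}<\alpha<\alpha_G(X)$ the definition of $\alpha_G(X)$ gives a constant $C(\alpha)$ with $\int_X e^{-\alpha\varphi_t}\,\omega^n\le C(\alpha)$ for all $t$. One combines this integrability with: the Green's function of $\omega$ (which via $\Delta_\omega\varphi_t>-n$ already yields $\tfrac1V\int_X(-\varphi_t)\,\omega^n\le C_0$, where $V=\int_X\omega^n$); Aubin's functionals $I_\omega,J_\omega$, for which one has along the continuity path a monotonicity of $(I_\omega-J_\omega)(\varphi_t)$ together with the comparison $I_\omega-J_\omega\ge\tfrac1{n+1}I_\omega$; and a Moser-type iteration applied to the Monge--Amp\`ere equation. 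The constant $\tfrac{n}{n+1}$ is not incidental here: it is exactly the threshold past which the exponent $\alpha$ dominates the factor $\tfrac1{n+1}$ coming from $I_\omega-J_\omega\ge\tfrac1{n+1}I_\omega$, which is precisely what lets one close the inequality and bound $-\inf_X\varphi_t$.

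With the uniform oscillation bound in hand, $S$ is closed, hence $S=[0,1]$, and $\omega_{\varphi_1}$ is the desired $G$-invariant K\"ahler--Einstein metric. I expect the genuine obstacle to be exactly this sharp estimate: making the monotonicity of the relevant energy functional along the continuity path interact with the $\alpha_G(X)$-integrability with precisely the constant $\tfrac{n}{n+1}$. The remaining ingredients --- the $t=0$ case, openness, and the higher--order estimates --- are by now routine consequences of Yau's resolution of the Calabi conjecture.
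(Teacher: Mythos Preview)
The paper does not give its own proof of this statement: it is recorded as a \emph{Fact} with a citation to Tian's original paper \cite{Tia87}, and the present paper's contribution is precisely to \emph{bypass} this analytic result by proving K-stability directly from the bound on $\lct(X)$. So there is no proof in the paper to compare against.

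That said, your sketch is a faithful outline of Tian's original argument via Aubin's continuity method, and the ingredients you list (Yau's solution at $t=0$, openness from the linearisation $\Delta_{\omega_{\varphi_t}}+t$, reduction of closedness to a $C^0$ bound, and the use of the $\alpha_G$-integrability together with the $I$--$J$ functionals to obtain that bound) are correct. One small imprecision: for closedness of $S$ in $[0,1]$ you ultimately need the oscillation estimate to be uniform as $t\to 1$, not just on each $[0,t_0]$ with $t_0<1$; this is exactly what the strict inequality $\alpha_G(X)>\tfrac{n}{n+1}$ buys you, and your last paragraph implicitly uses this, but the phrasing ``uniform on each $[0,t_0]$ with $t_0<1$'' undersells what is actually proved.
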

Let us recall the following conjecture, which was finally formulated in \cite{Don02}. 

\begin{Conj}[cf.\ \cite{Yau90}, \cite{Tia97}, \cite{Don02}]\label{DTY}
Let $(X,L)$ be a smooth polarized variety. $X$ has a K\"{a}hler metric with constant scalar curvature $($cscK metric$)$ with K\"{a}hler class $c_{1}(L)$ if and only if $(X,L)$ is K-polystable.
In particular, if $X$ is a Fano manifold, then 
$X$ has a K\"ahler-Einstein metric if and only if $(X,\mathcal{O}_{X}(-K_{X}))$ is 
K-polystable. 
\end{Conj}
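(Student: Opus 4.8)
The plan is to prove the two implications separately, since one is a variational bookkeeping exercise and the other is the genuinely deep analytic problem. For cscK (resp.\ K\"ahler--Einstein) $\Rightarrow$ K-polystability I would first fix the dictionary between the algebro-geometric and the differential-geometric sides: to a test configuration $(\mathcal X,\mathcal L)$ for $(X,L)$ one attaches, after a Kodaira embedding by $|L^{\otimes m}|$ and a one-parameter subgroup of $\PGL$, a path of Bergman metrics in $c_1(L)$, and the Donaldson--Futaki invariant of $(\mathcal X,\mathcal L)$ equals, up to a positive constant, the asymptotic slope at infinity of the Mabuchi K-energy $M_\omega$ along that path (Paul--Tian, Phong--Ross--Sturm). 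If $X$ carries a cscK metric in $c_1(L)$, then convexity of $M_\omega$ along (weak) geodesics in the space of K\"ahler metrics (Bando--Mabuchi, Berman--Berndtsson) shows $M_\omega$ is bounded below, so its slope along every test configuration is $\ge 0$, i.e.\ $(X,L)$ is K-semistable; the upgrade to K-polystability is the equality analysis, where vanishing slope forces a product configuration induced by a holomorphic vector field, either by strict convexity of $M_\omega$ transverse to the $\Aut(X,L)^{0}$-action, or, in the Fano case, by Stoppa's deformation argument together with Bando--Mabuchi uniqueness.

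The converse, K-polystability $\Rightarrow$ cscK, is where essentially all the difficulty lies, and for a general polarization it is beyond present methods, so I would concentrate on the Fano case $L=\mathcal O_X(-K_X)$. I would run a continuity path such as Aubin's, $\mathrm{Ric}(\omega_t)=t\,\omega_t+(1-t)\,\omega_0$ for $t\in[0,1)$: solvability at $t=0$ is Yau's theorem and openness is the implicit function theorem, so the whole game is closedness. To take a limit of the $\omega_t$ as $t\uparrow t_\infty$ one needs the \emph{partial $C^0$-estimate}, a uniform lower bound, independent of $t$, for the Bergman density of states of $(K_X^{-m},\omega_t)$ for a fixed large $m$. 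Granting it, one gets uniformly non-degenerate projective embeddings $X\hookrightarrow\mathbb P^{N_m}$, extracts a Gromov--Hausdorff limit, and identifies it via H\"ormander $\bar\partial$-estimates and semicontinuity with a normal $\mathbb Q$-Fano variety $W$ carrying a weak K\"ahler--Einstein metric; if $W\not\cong X$ this yields, after flattening, a test configuration degenerating $X$ to $W$ whose Donaldson--Futaki invariant is $\le 0$, and $<0$ unless $W\cong X$, so K-polystability forces $W\cong X$, hence $t_\infty=1$, and passing to the limit gives the K\"ahler--Einstein metric.

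The main obstacle is precisely the partial $C^0$-estimate together with the identification of the Gromov--Hausdorff limit as an algebro-geometric degeneration with controlled Donaldson--Futaki invariant: this rests on the structure theory of non-collapsed Ricci limit spaces (stratification, metric tangent cones), on an $\epsilon$-regularity or ``three-circle'' argument to propagate the Bergman lower bound through the degeneration, and on matching the analytic limit with a flat family so that K-stability can be invoked. In the non-Fano case the scalar curvature has no controlled sign, no analogue of the partial $C^0$-estimate is available, and even the right continuity method for cscK metrics is not pinned down, which is why Conjecture \ref{DTY} in its stated generality is still open; a complete argument along these lines exists only when $X$ is Fano.
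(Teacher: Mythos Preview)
The statement you are trying to prove is labeled \textbf{Conjecture}~\ref{DTY} in the paper, not a theorem. The paper does not prove it and does not attempt to; it is quoted as background motivating the study of K-stability, and the only partial result recorded is Fact~\ref{fact:YTD} (cscK plus discrete automorphisms $\Rightarrow$ K-stable), attributed to Tian, Donaldson, Chen--Tian, and Stoppa. So there is no ``paper's own proof'' to compare your proposal against.

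Your write-up is a reasonable high-level outline of the strategy that, for the Fano case, was carried out by Chen--Donaldson--Sun after this paper was written, and you correctly flag that the general polarized case remains open. But as a proof of the conjecture as stated it is incomplete by your own admission: you explicitly say the converse direction for arbitrary $(X,L)$ is ``beyond present methods'' and that a complete argument ``exists only when $X$ is Fano.'' That is an honest assessment, but it means you have not proved the biconditional in the first sentence of the statement. Even in the Fano case, the substantive steps you list---the partial $C^0$-estimate along the continuity path, identification of the Gromov--Hausdorff limit with a flat algebraic degeneration, and the sign control on its Donaldson--Futaki invariant---are each major theorems, not routine lemmas one can wave through in a proof.

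In short: the paper treats this as an open conjecture, and your proposal, while a sensible roadmap, does not (and in full generality cannot yet) close it.
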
 
\noindent
From the recent progress in Conjecture \ref{DTY} (in particular, \cite{Tia97}, \cite{Don05}, \cite{CT08}, \cite{Stp08}),  one direction is proved as follows. 
\begin{Fac}\label{fact:YTD}
Let $\mathrm{Aut}(X,L)$ be the group of holomorphic automorphisms of  a polarized manifold $(X,L)$.
If $\mathrm{Aut}(X,L)$ is discrete and $(X,L)$ admits cscK metrics, then it is K-stable.
\end{Fac}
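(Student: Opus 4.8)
The plan is to decompose the implication into two parts: (a) the existence of a cscK metric in $c_{1}(L)$ forces K-semistability of $(X,L)$, and (b) the discreteness of $\Aut(X,L)$ upgrades semistability to strict stability. The statement is then the combination of these two.

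For part (a) I would quote Donaldson's lower bound for the Calabi functional together with the Chen--Tian / Paul--Tian asymptotic analysis of the Mabuchi K-energy. Concretely, for every normal ample test configuration $\mathcal{X}$ for $(X,L)$ and every K\"ahler metric $\omega\in c_{1}(L)$ one has an estimate of the shape
$$\bigl\|\,\mathrm{Scal}(\omega)-\overline{\mathrm{Scal}}\,\bigr\|_{L^{2}(\omega)}\ \geq\ c_{n}\,\frac{-\DF(\mathcal{X})}{\|\mathcal{X}\|},$$
where $c_{n}>0$ depends only on $\dim X$ and $\|\mathcal{X}\|$ is a norm of the test configuration; this comes from running the K-energy along the Bergman geodesic ray attached to $\mathcal{X}$ and identifying its asymptotic slope with $\DF(\mathcal{X})$. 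Taking $\omega$ to be the cscK metric kills the left-hand side, so $\DF(\mathcal{X})\geq 0$ for all test configurations, i.e.\ $(X,L)$ is K-semistable.

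For part (b) I would follow Stoppa's blow-up argument. Assume for contradiction that some non-trivial test configuration $\mathcal{X}$ has $\DF(\mathcal{X})=0$. Choose a general point $p\in X$, let $\pi\colon\widehat X=\mathrm{Bl}_{p}X\to X$ with exceptional divisor $E$, and polarize $\widehat X$ by $L_{\varepsilon}=\pi^{*}L-\varepsilon E$ for small rational $\varepsilon>0$. Since $\Aut(X,L)$ is discrete, so is $\Aut(\widehat X,L_{\varepsilon})$, and the Arezzo--Pacard gluing theorem yields a cscK metric in $c_{1}(L_{\varepsilon})$ for all sufficiently small $\varepsilon$; hence, by part (a), $(\widehat X,L_{\varepsilon})$ is K-semistable. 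On the other hand, combining $\mathcal{X}$ with the deformation to the normal cone of $p$ produces a test configuration $\widehat{\mathcal{X}}_{\varepsilon}$ for $(\widehat X,L_{\varepsilon})$ whose Donaldson--Futaki invariant has an expansion $\DF(\widehat{\mathcal{X}}_{\varepsilon})=A\,\DF(\mathcal{X})-B\,\varepsilon^{\dim X-1}+(\text{higher order in }\varepsilon)$ with $A>0$ and $B>0$, the negative term being exactly the slope-type contribution of the point blow-up (the mechanism behind Ross--Thomas slope instability of blow-ups at small points). Since $\DF(\mathcal{X})=0$, this is strictly negative for $\varepsilon$ small, contradicting the K-semistability of $(\widehat X,L_{\varepsilon})$. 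Therefore $\DF(\mathcal{X})>0$ for every non-trivial test configuration, i.e.\ $(X,L)$ is K-stable.

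The main obstacle is part (b): it is not a formal consequence of semistability but genuinely uses the Arezzo--Pacard existence theorem — whose hypotheses require the discreteness of $\Aut$ precisely to kill the obstruction from holomorphic vector fields on the blow-up — together with the delicate computation of the sign and the $\varepsilon$-order of the leading correction to $\DF$ under the combined (normal-cone)$\times$(base) degeneration. Part (a), though analytically deep, can by now be cited essentially as a black box. One should also be mildly careful about which class of test configurations the term ``K-stable'' refers to (almost-trivial configurations must be excluded for the notion to be meaningful), but for the polarization $(X,\mathcal{O}_{X}(-K_{X}))$ relevant to this paper that subtlety does not affect the argument.
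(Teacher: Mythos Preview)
Your sketch is correct and matches the approach of the references the paper cites, but note that the paper does \emph{not} give its own proof of this statement: it is recorded as a ``Fact'' imported from the literature, with the citation list \cite{Tia97}, \cite{Don05}, \cite{CT08}, \cite{Stp08} immediately preceding it. Your part~(a) is exactly the content of Donaldson's lower bound for the Calabi functional \cite{Don05} (together with the asymptotic slope identification), and your part~(b) is precisely Stoppa's argument \cite{Stp08}, which combines Arezzo--Pacard gluing on a point blow-up with the K-semistability from~(a) to rule out $\DF=0$ test configurations. So there is nothing to compare against in the paper itself; you have simply reconstructed the cited proofs, and your remark about excluding almost-trivial test configurations is consistent with the paper's own footnote on the Li--Xu pathology.
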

The case where $\mathrm{Aut}(X,L)$ is not discrete is studied in \cite{Mab08} and \cite{Mab09}.
Combining Fact \ref{fact:tian} and \ref{fact:YTD}, we find that if $\alpha_G(X)>\frac{n}{n+1}$, then $(X,\mathcal{O}_{X}(-K_X))$ is K-polystable. 
The main theme in our paper is recovering this relation directly in algebro-geometric way. 
For that, we replace $\alpha_G(X)$ by the invariant in algebro-geometric context, which is often called  the (\textit{global}) \textit{log canonical threshold} defined by
\begin{equation}\label{eq:alpha}
	\lct_{G}(X)
	:=
	\inf_{m\in\mathbb{Z}_{>0}}
\inf
_{
\Sigma
} 
\mathrm{lct}\big(X,\frac{1}{m}\Sigma\big). 
\end{equation}
In the second infimum in (\ref{eq:alpha}), $\Sigma$ runs over the set of $G$-invariant  sublinear system of $\mid -mK_X\mid $. 
If $G$ is finite, then we can replace (\ref{eq:alpha}) by 
\begin{equation}\label{eq:alpha_finite} 
	\lct_{G}(X)
	=
	\inf_{m\in\mathbb{Z}_{>0}}
\inf_{
D
}
\mathrm{lct}\big(X,\frac{1}{m}D \big). 
\end{equation}
In particular, if $G$ is trivial, we denote it by just $\lct(X)$.
In the second infimum in (\ref{eq:alpha_finite}), $D$ runs over the set of $G$-invariant effective divisors which are linearly equivalent to $-mK_X$.
Let us recall that, in general, for an effective $\mathbb{Q}$-divisor $D$, the \textit{log canonical threshold} $\lct (X,D)$ is an invariant to measure the singularities of a pair $(X,D)$ as follows; 
\begin{equation}\label{eq:lct_D}
	\mathrm{lct}(X,D):=
	\sup \{c\in \mathbb{Q}_{>0} \mid (X,cD) \,\, \mbox{is log canonical}\}.
\end{equation}
In the appendix of \cite{CSD08} by Demailly, it is explained that $\lct_G(X)$ is equal to $\alpha_G(X)$ of Tian, for smooth $X$ with compact $G$. 
While $\alpha_G(X)$ is defined in differential geometric way and used for the existence problem of K\"ahler-Einstein metrics on Fano manifolds, $\lct_G(X)$ is defined and studied in purely algebro-geometric way.
So, we work with $\lct_G(X)$ instead of $\alpha_G(X)$. 

We work over an algebraically closed field $k$ with characteristic $0$, 
since we use the resolution of singularities for the equality (\ref{eq:lctD=lctDxA}). 
On the other hand, since that is the only point we need the assumption of characteristic,   
our main theorems \ref{Thm:main} and \ref{Thm:main_g} work up to dimension $3$ over an arbitrary algebraically closed field with positive characteristic as well. 

The main statement is as follows. 
\begin{Thm}\label{Thm:main}
Let $X$ be a $($log-canonical$)$ $\mathbb{Q}$-Fano variety with $\dim(X)=n$ and 
suppose that $\lct(X)> \frac{n}{n+1}$ $($resp.\ $\lct(X)\geq \frac{n}{n+1}$$)$. 
Then, $(X,\mathcal{O}_{X}(-K_X))$ is K-stable $($resp.\ K-semistable$)$. 
\end{Thm}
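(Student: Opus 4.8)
The plan is to check the sign of the Donaldson--Futaki invariant $\DF(\mathcal{X},\mathcal{L})$ for every nontrivial test configuration $(\mathcal{X},\mathcal{L})$ of $(X,\mathcal{O}_X(-K_X))$, by reducing it to one intersection inequality on a compactification and then feeding in the bound on $\lct(X)$ through a Seshadri constant. First I would reduce to normal test configurations: passing to the normalization does not increase $\DF$ and sends nontrivial configurations to nontrivial ones, and one may further replace $(\mathcal{X},\mathcal{L})$ by a partial resolution of the non--log-canonical locus of its central fibre without increasing $\DF$. Compactifying over $\mathbb{P}^1$ by glueing a product configuration along $\infty$, we obtain a normal projective $\rho\colon\bar{\mathcal{X}}\to\mathbb{P}^1$, flat, with a relatively ample $\bar{\mathcal{L}}$ extending $\mathcal{L}$. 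Since $X$ is $\mathbb{Q}$-Gorenstein, Odaka's intersection formula identifies $\DF(\mathcal{X},\mathcal{L})$ with a fixed positive multiple of $n\,\bar{\mathcal{L}}^{\,n+1}+(n+1)\,K_{\bar{\mathcal{X}}/\mathbb{P}^1}\cdot\bar{\mathcal{L}}^{\,n}$.

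Next I would use the Fano condition. Since $\bar{\mathcal{L}}$ and $-K_{\bar{\mathcal{X}}/\mathbb{P}^1}$ agree on every fibre and away from the central fibre, $E:=\bar{\mathcal{L}}+K_{\bar{\mathcal{X}}/\mathbb{P}^1}$ is a $\mathbb{Q}$-divisor supported on $\bar{\mathcal{X}}_0$, and after adding to $\bar{\mathcal{L}}$ a suitable multiple of $\bar{\mathcal{X}}_0=\rho^{*}(0)$ --- which alters neither $\DF$ nor the intersection numbers below, because $\bar{\mathcal{X}}_0$ restricts trivially to every fibre --- we may assume $E\ge0$ and $\Supp(E)\subsetneq\Supp(\bar{\mathcal{X}}_0)$. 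Substituting $K_{\bar{\mathcal{X}}/\mathbb{P}^1}=-\bar{\mathcal{L}}+E$ collapses the formula to
\[
\DF(\mathcal{X},\mathcal{L})\;=\;c\bigl((n+1)\,E\cdot\bar{\mathcal{L}}^{\,n}-\bar{\mathcal{L}}^{\,n+1}\bigr),\qquad c>0,
\]
so the theorem becomes the intersection inequality $(n+1)\,E\cdot\bar{\mathcal{L}}^{\,n}\ge\bar{\mathcal{L}}^{\,n+1}$, which should moreover be strict unless $E=0$ and $\bar{\mathcal{L}}^{\,n+1}=0$, i.e. unless $(\mathcal{X},\mathcal{L})$ is trivial.

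The heart of the argument is to obtain this inequality from $\lct(X)>\tfrac{n}{n+1}$. After a finite base change $\mathbb{P}^1\to\mathbb{P}^1$ and normalization --- which only multiplies $\DF$ by the degree --- we may take $\bar{\mathcal{X}}_0$ reduced, so each of its components $F$ induces a divisorial (or trivial) valuation $v_F$ on $X$ via the degeneration. On one side, $\bar{\mathcal{L}}^{\,n+1}$ unwinds into a sum of volume integrals whose lengths are the pseudo-effective thresholds $T(v_F):=\sup\{s>0:\ -K_X-s\,v_F\ \text{pseudo-effective}\}$, and monotonicity of volumes packages this into an estimate $\bar{\mathcal{L}}^{\,n+1}\le(n+1)\,\Sesh(\mathcal{X},\mathcal{L})\cdot(-K_X)^{n}$, where the Seshadri constant $\Sesh(\mathcal{X},\mathcal{L})$ of the test configuration is in turn bounded above by $\tfrac{n}{n+1}$ times a $\bar{\mathcal{L}}$-weighted combination of the $T(v_F)$. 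On the other side, $E\cdot\bar{\mathcal{L}}^{\,n}$ unwinds into the log-discrepancy data of the $v_F$; and ampleness of $-K_X$ furnishes, for each $s<T(v_F)$, an effective $\mathbb{Q}$-divisor $D\sim_{\mathbb{Q}}-K_X$ with $\mathrm{ord}_{v_F}(D)\ge s$, whence $\lct(X)\le\lct(X,D)\le A_X(v_F)/\mathrm{ord}_{v_F}(D)$ and therefore $A_X(v_F)\ge\lct(X)\,T(v_F)$ in the limit. Combining the two sides yields $(n+1)\,E\cdot\bar{\mathcal{L}}^{\,n}\ge\bar{\mathcal{L}}^{\,n+1}$, strictly when $\bar{\mathcal{L}}^{\,n+1}>0$ and $\lct(X)>\tfrac{n}{n+1}$; when $\bar{\mathcal{L}}^{\,n+1}\le0$ effectivity of $E$ and ampleness of $\bar{\mathcal{L}}$ on the fibre components give it for free, and equality forces $E=0$ with $\bar{\mathcal{L}}^{\,n+1}=0$, i.e. trivial valuative data, whence $(\mathcal{X},\mathcal{L})$ is a product configuration and a separate, easier argument --- that $\lct(X)>\tfrac{n}{n+1}$ is incompatible with a nontrivial $\mathbb{C}^{*}$-action on $X$ --- completes the K-stable case. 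For $\lct(X)\ge\tfrac{n}{n+1}$ the weak inequality survives unchanged, yielding K-semistability.

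The genuine obstacle is this last paragraph: making rigorous the two ``unwinding'' identities and the Seshadri-constant comparison, i.e. relating the components $F$ of the (possibly reducible, non-reduced, non--log-canonical) central fibre, the induced valuations $v_F$ on $X$, the coefficients of $E$, and the intersection numbers of $\bar{\mathcal{L}}$; this is where one transforms the \emph{global} invariant $\lct(X)$ into a statement along the degeneration, tracks log discrepancies through the family $\bar{\mathcal{X}}\to\mathbb{P}^1$ (inversion of adjunction for a singular reducible central fibre), and checks that base change, normalization and resolution of the non-lc locus genuinely preserve both the sign of $\DF$ and the dichotomy ``equality $\iff$ trivial''. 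Everything preceding it --- reduction to normal configurations, compactification, Odaka's formula, the Fano substitution --- is essentially formal.
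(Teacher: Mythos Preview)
Your formal reductions and the Fano substitution reducing $\DF>0$ to the single inequality $(n+1)\,E\cdot\bar{\mathcal L}^{\,n}>\bar{\mathcal L}^{\,n+1}$ are correct, but from that point on your route diverges sharply from the paper's and, as written, does not close. The paper never works with an arbitrary normal test configuration or with valuations $v_F$ of central-fibre components. Instead it first reduces (Proposition~\ref{b.e}) to semi test configurations of the form $(\mathcal B,\mathcal L(-E))$ where $\mathcal B$ is the blow up of $X\times\mathbb A^1$ along a \emph{flag ideal} $\mathcal J=I_0+I_1t+\cdots+(t^N)$, and records three explicit coefficient vectors $a_i,b_i,c_i$ (of the $E_i$ in $K_{\mathcal B/X\times\mathbb A^1}$, in $\Pi^*(X\times\{0\})$, and in $E$). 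The bridge to $\lct(X)$ is then a single Seshadri bound (Proposition~\ref{prop:first}):
\[
\Sesh\bigl(\mathcal J,\;(X\times\mathbb A^1,\mathcal O(-K_{X\times\mathbb A^1}))\bigr)\ \le\ \frac{1}{\lct(X)}\,\min_i\frac{a_i-b_i+1}{c_i},
\]
proved by actually producing a divisor $D\sim_{\mathbb Q}-K_X$ with high multiplicity along $I_0$ and running the chain $\lct(X)\le\lct(X,D)=\lct((X\times\mathbb A^1,X\times\{0\});D\times\mathbb A^1)\le\lct((X\times\mathbb A^1,X\times\{0\});c\mathcal J)$ via inversion of adjunction. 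The $\DF$ is then split not as you do but as
\[
-\bigl((\bar{\mathcal L}-E)^n.\bar{\mathcal L}\bigr)\;+\;\bigl((\bar{\mathcal L}-E)^n.((n+1)rK_{\mathcal B/X\times\mathbb A^1}-nE)\bigr);
\]
the first term is $\ge0$ by an elementary polynomial identity (Lemma~\ref{Lem:positivity}), and the Seshadri bound forces $\frac{n+1}{n}K_{\mathcal B/X\times\mathbb A^1}-\Sesh(\mathcal J)\,E$ to be effective, which combined with $((\bar{\mathcal L}-E)^n.E)>0$ makes the second term positive.

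The genuine gap in your sketch is the ``unwinding'' paragraph, and it is not just a matter of missing details. The inequality $A_X(v_F)\ge\lct(X)\,T(v_F)$ is correct, but the companion estimates you state are not: there is no standard ``Seshadri constant of a test configuration'' admitting the bound you claim, and the only volume inequality one gets from $T(v_F)$ alone is the trivial $\int_0^{T(v_F)}\mathrm{vol}(-K_X-tv_F)\,dt\le T(v_F)\,(-K_X)^n$, which together with $A_X(v_F)\ge\lct(X)\,T(v_F)$ yields $(n+1)E\cdot\bar{\mathcal L}^{\,n}\ge\bar{\mathcal L}^{\,n+1}$ only under the stronger hypothesis $\lct(X)\ge 1$. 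In general $S(v_F):=\frac{1}{V}\int\mathrm{vol}(-K_X-tv_F)\,dt$ can be arbitrarily close to $T(v_F)$ (think of an Okounkov body shaped like a frustum widening toward $x_1=T$), so no inequality of the shape $S\le\frac{n}{n+1}T$ is available. The valuative route \emph{can} be made to work (this is essentially Fujita's later argument), but it too must, at the crucial step, manufacture a single effective divisor $D\sim_{\mathbb Q}-K_X$ out of the degeneration and apply $\lct(X)\le\lct(X,D)$ to it --- which is exactly what the paper's Proposition~\ref{prop:first} does directly on the flag-ideal model. What the paper's route buys is that none of your base change, reduced-fibre, or valuation-tracking difficulties arise: the numerical data $(a_i,b_i,c_i)$ are read off the blow-up, and both halves of the decomposition are elementary intersection theory.
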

\noindent
We note that the log-canonicity of $X$ in the assumption naturally follows from the assumption that $\lct(X)\ge 0$. 
Furthermore, the first author proved in \cite{Od11} (modulo LMMP)
that K-semistability of a $\mathbb{Q}$-Fano variety $X$ implies the log-canonicity of $X$. 

We also note that this notion of K-stability 
implies that $X$ does not admit any non-trivial one parameter subgroup in $\Aut(X)$. 
Therefore, together with Matsushima's obstruction to K\"ahler-Einstein metrics \cite{Mat57}, we have
\begin{Cor}\label{aut:fin}
Let $X$ be a smooth Fano manifold over $\mathbb{C}$ with $\dim(X)=n$ and 
suppose that $\lct(X)>\frac{n}{n+1}$. Then, $\Aut(X)$ is finite. 
\end{Cor}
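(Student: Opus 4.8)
The plan is to feed the hypothesis into Theorem~\ref{Thm:main} and then combine the resulting K-stability with classical structure theory of automorphism groups. Since $\lct(X)>\frac{n}{n+1}$, Theorem~\ref{Thm:main} gives that $(X,\mathcal{O}_X(-K_X))$ is K-stable. Recall that $\Aut(X)$ is a linear algebraic group over $\mathbb{C}$: for $m\gg 0$ the line bundle $-mK_X$ is very ample, every $\phi\in\Aut(X)$ satisfies $\phi^{*}(-mK_X)\cong -mK_X$ canonically and hence acts on $H^0(X,-mK_X)$, giving an injection $\Aut(X)\hookrightarrow\PGL\bigl(H^0(X,-mK_X)\bigr)$ onto a closed subgroup. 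In particular $\Aut(X)$ has finitely many connected components, so it suffices to show that the identity component $\Aut^0(X)$ is trivial.

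\textbf{Step 1: $\Aut^0(X)$ contains no algebraic torus.} Suppose $\lambda\colon\mathbb{G}_m\to\Aut(X)$ is a non-trivial one-parameter subgroup, with generating holomorphic vector field $v$. After replacing $-K_X$ by $-mK_X$ so that the $\mathbb{G}_m$-action lifts to the polarization, $\lambda$ yields the product test configuration $(\mathcal{X}_\lambda,\mathcal{L}_\lambda)=\bigl(X\times\mathbb{A}^1,\ p_1^{\,*}\mathcal{O}_X(-mK_X)\bigr)$ with $\mathbb{G}_m$ acting by $t\cdot(x,z)=(\lambda(t)x,tz)$. This is a \emph{non-trivial} test configuration, and (Donaldson's reduction of $\DF$ on product configurations) its Donaldson--Futaki invariant $\DF(\mathcal{X}_\lambda,\mathcal{L}_\lambda)$ is a fixed positive multiple of the classical Futaki invariant $\mathrm{Fut}(v)$. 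Applying the same construction to $\lambda^{-1}$, whose generator is $-v$, and using $\mathrm{Fut}(-v)=-\mathrm{Fut}(v)$, K-stability would force both $\mathrm{Fut}(v)>0$ and $-\mathrm{Fut}(v)>0$, which is absurd. (Equivalently: K-semistability already forces $\mathrm{Fut}(v)=0$, and then strict K-stability is contradicted by the non-triviality of $(\mathcal{X}_\lambda,\mathcal{L}_\lambda)$.) Hence the maximal torus of $\Aut^0(X)$ is trivial, so $\Aut^0(X)$ is unipotent.

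\textbf{Step 2: Matsushima kills the unipotent part.} By Demailly's appendix to~\cite{CSD08} we have $\lct(X)=\alpha(X)$, so the hypothesis reads $\alpha(X)>\frac{n}{n+1}$; Tian's Fact~\ref{fact:tian} then gives that $X$ admits a K\"ahler--Einstein metric, and Matsushima's theorem~\cite{Mat57} shows that $\Aut^0(X)$ is reductive. A connected linear algebraic group that is simultaneously unipotent and reductive is trivial, so $\Aut^0(X)=\{1\}$ and therefore $\Aut(X)$ is finite.

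The only point requiring a little care is the bookkeeping in Step~1 — that the product configuration attached to a non-trivial one-parameter subgroup genuinely counts as a non-trivial test configuration in the definition of K-stability, and that its Donaldson--Futaki invariant is odd in $v$ — but both are standard. Everything else is a formal assembly of Theorem~\ref{Thm:main}, Fact~\ref{fact:tian}, Matsushima's theorem, and the structure theory of linear algebraic groups, so I do not expect a substantive obstacle.
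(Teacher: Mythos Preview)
Your proof is correct and follows essentially the same route as the paper: both combine Theorem~\ref{Thm:main} (K-stability, hence no non-trivial one-parameter subgroup in $\Aut(X)$) with Fact~\ref{fact:tian} (K\"ahler--Einstein metric) and Matsushima's theorem (reductivity of $\Aut(X)$) to force $\Aut^0(X)$ to be trivial. You spell out a few points the paper leaves implicit---the linear-algebraic structure of $\Aut(X)$, the oddness of $\DF$ on product configurations, and the appeal to Demailly's identification $\lct(X)=\alpha(X)$---but the architecture is the same.
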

\noindent
Although K-stability in Theorem \ref{Thm:main} and the finiteness of $\mathrm{Aut}(X)$  in Corollary \ref{aut:fin} might seem to be stronger than Fact \ref{fact:YTD}, we can recover them in analytic way\footnote{This is pointed out to us by Professor Hiraku Nakajima.}.
In fact, by using Tian's estimate in \cite{Tia87}, we find that if $\alpha(X)$ (without $G$-action) is strictly greater than $n/(n+1)$, then the set of K\"ahler-Einstein metrics is compact. 
The set  of K\"ahler-Einstein metrics has a transitive action of the identity component $\Aut^{0}(X)$ of $\Aut(X)$ by Bando-Mabuchi \cite{BM85} and the connected component of its isotropy subgroup is a compact subgroup of $\Aut^{0}(X)$ (cf.\ \cite{Mat57}). 
Therefore, if $\Aut^{0}(X)$ is not trivial, the set of K\"ahler-Einstein metrics is non-compact, which is in contradiction with the condition on $\alpha(X)$. 
Hence,  $\mathrm{Aut}(X)$ is finite and K-polystability in Fact \ref{fact:YTD} is equivalent to K-stability. 

We remark that the first author (\cite{OdII}) also found similar proofs for the finiteness of $\mathrm{Aut}(X)$ by using K-stability in the case of general type varieties,
 which is well known, and of polarized Calabi-Yau varieties.
\begin{Ex}
{\rm (i)} For $\dim(X)=2$ case, it is easy to see that 
a blow up of general $n(\ge 5)$  points of the projective plane has the finite automorphism group $\Aut(X)$. 
On the other hand, it is known that  $\alpha(X)\ge \frac{2}{3}$ for $n\ge 6$ case and they have K\"ahler-Einstein metrics (see \cite{TY87} and \cite{Che08}). 
\\
{\rm (ii)} Let $X$ be a general smooth hypersurface of degree $n+1\ge 3$ in $\mathbb{P}^{n+1}$. 
Then, $\alpha(X)>\frac{n}{n+1}$ (cf.\ e.g.\ \cite{CPW07}). 
On the other hand, it is known that a smooth hypersurface has the finite automorphism  group, due to \cite{MM63}. 
 \end{Ex}

When we apply Fact \ref{fact:tian}, the group action of $G$ plays important role.
In fact, $\alpha(X)$ might not be large enough in general.
The large symmetry of $X$ by $G$ makes $\alpha_G(X)$ larger, i.e., $\alpha_{G'}(X)\ge \alpha_G(X)$ if $G\subset G'$. 
We remark that the compactness of $G$ is not necessarily assumed in our results. 

\begin{Ex}\label{ex:toric_MU}
{\rm (i)} For a symmetric toric Fano manifold, in the sense of Batyrev and Selivanova  \cite{BS99}, $\alpha_{G}(X)=1$ where $G$ is a non-connected compact subgroup of $\Aut(X)$ (whose identity component is the algebraic torus). 
However, we can see $\alpha(X)\le \frac{1}{2}$ due to \cite{Son03}.
\\
{\rm (ii)} 
Let $X$ be the so-called Mukai-Umemura $3$-fold.
 This is a compactification of the quotient $\mathrm{SL}(2,\mathbb{C})/\Gamma$ where $\Gamma$ is the icosahedral group.
Then, it is known that for an action of 
$G\cong \SO(3) (\subset \SL(2,\mathbb{C}))$, $\alpha_{G}(X)=\frac{5}{6}$ (cf.\ \cite{Don07}) but $\alpha(X)=\frac{1}{2}$ (cf.\ \cite{CS08}). 
\end{Ex}
\noindent
Therefore, it is important to establish the $G$-equivariant version of Theorem \ref{Thm:main}. 
We have the following partial results in this direction. 
First, the next proposition follows straightforwardly if we apply
the Borel fixed point theorem (cf.\ 
\cite[Chapter 4, Theorem 6.6]{Mil05}
) to the natural action of $G$ on $\mid -mK_X \mid$ for $m\in \mathbb{Z}_{>0}$  and take into account the lower semicontinuity of log-canonical threshold with respect to a variation of divisors (cf.\ \cite[Example 9.5.41]{Laz04}). 

\begin{Prop}\label{solvable.alpha.same}
For any $\mathbb{Q}$-Fano variety, 
$\lct_{G}(X)=\lct(X)$ if $G$ is a connected and solvable algebraic group. 
\end{Prop}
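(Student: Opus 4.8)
The plan is to establish the two inequalities $\lct_G(X)\ge\lct(X)$ and $\lct_G(X)\le\lct(X)$ separately. The first is immediate: a $G$-invariant sublinear system of $|-mK_X|$ is in particular a sublinear system, so the double infimum (\ref{eq:alpha}) defining $\lct_G(X)$ is taken over a subset of the one defining $\lct(X)$, and an infimum over a smaller set can only be larger. For the reverse inequality I would first reduce to a concrete statement: it suffices to show that for every real $c>\lct(X)$ there exist an $m\in\mathbb{Z}_{>0}$ and a $G$-invariant effective divisor $D_0$ with $D_0\sim -mK_X$ and $\lct(X,\tfrac1m D_0)<c$. Granting this, the sublinear system $\{D_0\}$ — the one-dimensional subspace of $H^0(X,\mathcal{O}_X(-mK_X))$ spanned by a defining section of $D_0$ — is $G$-invariant and occurs in (\ref{eq:alpha}), so $\lct_G(X)\le\lct(X,\tfrac1m D_0)<c$; letting $c\downarrow\lct(X)$ gives $\lct_G(X)\le\lct(X)$.

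To produce such a $D_0$, I would fix $c>\lct(X)$, choose $m$ and an effective $D\in|-mK_X|$ with $\lct(X,\tfrac1m D)<c$ from the definition of $\lct(X)$, and fix $c'$ with $\lct(X,\tfrac1m D)\le c'<c$. Since the reflexive powers of the canonical sheaf are functorial, $G$ acts naturally on $H^0(X,\mathcal{O}_X(-mK_X))$ and hence on the linear system $|-mK_X|$, and as each $g\in G$ acts on $X$ by an automorphism one has $\lct(X,\tfrac1m(g\cdot D'))=\lct(X,\tfrac1m D')$ for all $D'\in|-mK_X|$. I would then consider
$$
Z:=\{\,D'\in|-mK_X| \mid \lct(X,\tfrac1m D')\le c'\,\},
$$
which is non-empty (it contains $D$), $G$-invariant (by the previous sentence), and Zariski closed by the lower semicontinuity of the log canonical threshold in the family of divisors parametrized by $|-mK_X|$ (\cite[Example 9.5.41]{Laz04}). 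Replacing $Z$ by one of its irreducible components — which the connected group $G$ must preserve — yields a non-empty complete variety with a $G$-action, so the Borel fixed point theorem (\cite[Chapter 4, Theorem 6.6]{Mil05}) supplies a $G$-fixed point $[D_0]\in Z$. The corresponding section of $\mathcal{O}_X(-mK_X)$ is then $G$-semi-invariant, so its zero divisor $D_0$ is a $G$-invariant effective divisor linearly equivalent to $-mK_X$ with $\lct(X,\tfrac1m D_0)\le c'<c$, which is exactly what the reduction required.

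I do not anticipate a genuine obstacle here; this is the straightforward argument indicated before the statement, and the real work is just in applying the two imported facts correctly. The only minor points needing care are that the $G$-action on the possibly reducible complete scheme $Z$ be put into the shape required by the Borel fixed point theorem — handled above by passing to a single irreducible component and using connectedness of $G$ — and that a $G$-fixed point of a linear system really does correspond to a $G$-invariant divisor, which is routine. An alternative sidestepping the semicontinuity input would be to apply the Lie--Kolchin theorem to get a $G$-stable complete flag in $H^0(X,\mathcal{O}_X(-mK_X))$, but then one still has to identify a flag member with small log canonical threshold, which is less transparent; I would therefore present the version above, which is precisely the one suggested by combining the Borel fixed point theorem with semicontinuity of $\lct$.
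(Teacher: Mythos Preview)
Your proof is correct and is precisely the argument the paper indicates just before the statement: apply the Borel fixed point theorem to the $G$-action on $|-mK_X|$, using lower semicontinuity of the log canonical threshold to cut out a closed $G$-invariant locus on which to find the fixed point. You have simply filled in the details of that sketch, including the minor points (passing to an irreducible component, and that a $G$-fixed point of the linear system gives a $G$-invariant one-dimensional sublinear system).
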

\noindent
Then, we have
\begin{Cor}\label{cor:solvable}
Let $X$ be a $($log-canonical$)$ $\mathbb{Q}$-Fano variety with $\dim(X)=n$ and 
suppose that $\lct_{G}(X)> \frac{n}{n+1}$  $($resp. $\lct_{G}(X) \ge \frac{n}{n+1}$$)$ with some connected solvable algebraic subgroup $G\subset \Aut(X)$.
Then, $(X,\mathcal{O}_{X}(-K_X))$ is K-stable $($resp. K-semistable$)$.
Furthermore, if $X$ is smooth, $\lct_{G}(X)>\frac{n}{n+1}$ implies that $G$ is trivial. 
\end{Cor}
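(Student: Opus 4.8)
The plan is to derive this corollary by feeding Proposition \ref{solvable.alpha.same} into Theorem \ref{Thm:main} and, for the last assertion, into Corollary \ref{aut:fin}; essentially no new input is needed, so the whole content is bookkeeping.

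\emph{Step 1: the stability statement.} Since $G$ is a connected solvable algebraic subgroup of $\Aut(X)$, Proposition \ref{solvable.alpha.same} gives $\lct_{G}(X)=\lct(X)$. Hence the hypothesis $\lct_{G}(X)>\frac{n}{n+1}$ $($resp.\ $\lct_{G}(X)\ge\frac{n}{n+1}$$)$ is literally the hypothesis $\lct(X)>\frac{n}{n+1}$ $($resp.\ $\lct(X)\ge\frac{n}{n+1}$$)$ of Theorem \ref{Thm:main}, and we conclude that $(X,\mathcal{O}_{X}(-K_{X}))$ is K-stable $($resp.\ K-semistable$)$.

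\emph{Step 2: triviality of $G$ in the smooth case.} Assume moreover that $X$ is smooth and $\lct_{G}(X)>\frac{n}{n+1}$; as in Step 1 this yields $\lct(X)=\lct_{G}(X)>\frac{n}{n+1}$. By Corollary \ref{aut:fin}, $\Aut(X)$ is a finite group. (Corollary \ref{aut:fin} is stated over $\mathbb{C}$; for a general algebraically closed field $k$ of characteristic $0$ one descends $X$, $G$ and the $G$-action to a subfield $k_{0}\subset k$ finitely generated over $\mathbb{Q}$, fixes an embedding $k_{0}\hookrightarrow\mathbb{C}$, and uses that the log canonical threshold is unchanged under extension of the algebraically closed base field while the automorphism group scheme commutes with such base change, so that finiteness of $\Aut(X)$ over $\mathbb{C}$ descends to $k$.) Since $G$ is a connected algebraic subgroup of the finite group $\Aut(X)$ and $\mathrm{char}(k)=0$, a connected scheme of finite type with only finitely many points is a single point, whence $G$ is trivial.

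\emph{On the main difficulty.} There is no genuine obstacle beyond assembling the earlier results: the substance has already been packaged into Proposition \ref{solvable.alpha.same}, Theorem \ref{Thm:main} and Corollary \ref{aut:fin}. The single point worth isolating is that Step 2 passes through analysis, since Corollary \ref{aut:fin} relies on Tian's existence theorem for K\"ahler--Einstein metrics together with Matsushima's reductivity obstruction. If one insists on staying algebro-geometric, K-stability of $(X,\mathcal{O}_{X}(-K_{X}))$ only gives directly that $\Aut(X)$ contains no non-trivial one-parameter subgroup $\mathbb{G}_{m}$, so that the maximal torus of the connected solvable group $G$ is trivial and $G$ is unipotent; ruling out a non-trivial connected unipotent $G$ is exactly the step that, with the tools of this paper, forces one back to the analytic finiteness of $\Aut(X)$, and this is where the real work would lie in a self-contained argument.
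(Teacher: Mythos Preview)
Your proof is correct and matches the paper's approach exactly: the paper also derives the corollary immediately from Proposition~\ref{solvable.alpha.same} and Theorem~\ref{Thm:main}, and explicitly notes that the triviality of $G$ follows from Corollary~\ref{aut:fin}. Your additional remarks on the base field and on the analytic input behind Corollary~\ref{aut:fin} are sound elaborations that the paper leaves implicit.
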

\noindent
We note that the triviality of $G$ follows from Corollary \ref{aut:fin}. 

Also, we have the following in completely similar manner as the proof of Theorem \ref{Thm:main}.
\begin{Thm}\label{Thm:main_g}
Let $X$ be a $($log-canonical$)$ $\mathbb{Q}$-Fano variety with $\dim(X)=n$ 
and $G$ be a $($not necessarily compact$)$ subgroup of $\mathrm{Aut}(X)$.
Suppose that $\lct_{G}(X)> \frac{n}{n+1}$  $($resp. $\lct_{G}(X) \ge \frac{n}{n+1}$$)$. 
Then, $(X,\mathcal{O}_{X}(-K_X))$ is $G$-equivariantly K-stable $($resp. K-semistable$)$. 
\end{Thm}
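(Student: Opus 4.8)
The plan is to test $G$-equivariant K-(semi)stability directly from the definition, carrying over the argument of Theorem~\ref{Thm:main} and checking at each step that every auxiliary divisor produced is $G$-invariant, so that $\lct_{G}(X)$ may be used in place of $\lct(X)$. Recall that a $G$-equivariant test configuration of $(X,\mathcal O_{X}(-K_{X}))$ carries a $G$-action on $(\mathcal X,\mathcal L)$ lifting that on $(X,-K_{X})$, commuting with the $\mathbb G_{m}$-action and trivial on the base $\mathbb A^{1}$, and that $G$-equivariant K-stability means $\DF(\mathcal X,\mathcal L)>0$ for every nontrivial such configuration (and $\ge 0$ for semistability). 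So fix a nontrivial $G$-equivariant test configuration $(\mathcal X,\mathcal L)$. Replacing $\mathcal X$ by its normalization, which inherits the $G$-action and whose Donaldson--Futaki invariant is no larger by the first author's results, we may assume $\mathcal X$ normal; glueing in the trivial family over $\mathbb P^{1}\setminus\{0\}$ produces a normal projective $G$-equivariant family $\pi\colon\bar{\mathcal X}\to\mathbb P^{1}$ with a relatively ample $\mathbb Q$-line bundle $\bar{\mathcal L}$ restricting to $-K_{X}$ away from $0$.

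Next I would invoke the first author's intersection-theoretic formula for the Donaldson--Futaki invariant of a normal anticanonical test configuration: up to a fixed positive multiple, $\DF(\mathcal X,\mathcal L)$ equals $\tfrac{n}{n+1}\,\bar{\mathcal L}^{\,n+1}+\bar{\mathcal L}^{\,n}\!\cdot K_{\bar{\mathcal X}/\mathbb P^{1}}$. Since $\bar{\mathcal L}$ and $-K_{\bar{\mathcal X}/\mathbb P^{1}}$ agree on a general fiber, $\mathcal D:=\bar{\mathcal L}+K_{\bar{\mathcal X}/\mathbb P^{1}}$ is $\mathbb Q$-linearly equivalent to a $\mathbb Q$-divisor supported on the central fiber $\mathcal X_{0}=\sum_{i}b_{i}F_{i}$; substituting $K_{\bar{\mathcal X}/\mathbb P^{1}}=\mathcal D-\bar{\mathcal L}$ rewrites $\DF(\mathcal X,\mathcal L)$, up to the same positive multiple, as $\bar{\mathcal L}^{\,n}\!\cdot\mathcal D-\tfrac{1}{n+1}\bar{\mathcal L}^{\,n+1}$. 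Thus the theorem is reduced to the single inequality
\[
(n+1)\,\bar{\mathcal L}^{\,n}\!\cdot\mathcal D\ >\ \bar{\mathcal L}^{\,n+1}\qquad(\text{resp.}\ \ \ge)
\]
for every nontrivial normal $G$-equivariant compactified configuration. Everything so far is canonical in $(\mathcal X,\mathcal L)$, hence $G$-equivariant, and $\mathcal D$ and the $F_{i}$ are $G$-invariant.

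The crux — and, as the abstract says, the key of the whole argument — is to prove this inequality through a relation among the Seshadri constant of the central fiber, the $\alpha$-invariant and the two intersection numbers. Concretely I would set $\epsilon:=\Sesh(\mathcal X_{0};\bar{\mathcal X},\bar{\mathcal L})=\sup\{c>0:\bar{\mathcal L}-c\,\mathcal X_{0}\text{ is nef}\}$, a well-defined positive rational number since the only obstruction to nefness comes from horizontal curves, and then compare $\bar{\mathcal L}^{\,n}\!\cdot\mathcal D$ with $\bar{\mathcal L}^{\,n+1}$ through $\epsilon$ and $\lct_{G}(X)$: each component $F_{i}$ of $\mathcal X_{0}$ determines a divisorial valuation on $X\times\mathbb A^{1}$ with center in $X\times\{0\}$, and using resolution of singularities together with the identity $\mathrm{lct}(X,D)=\mathrm{lct}(X\times\mathbb A^{1},D\times\mathbb A^{1})$ one transports to $\mathcal X_{0}$ the $G$-invariant anticanonical $\mathbb Q$-divisors $D_{m}\sim_{\mathbb Q}-K_{X}$ read off from the $\mathbb G_{m}$-weights of $(\mathcal X,\mathcal L)$. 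The defining property $\lct_{G}(X)\le\mathrm{lct}(X,\tfrac1m D_{m})$ of the global log canonical threshold then bounds how singular such a degenerating anticanonical divisor can be — equivalently, it bounds the coefficients of $\mathcal D$ along the $F_{i}$ in terms of the $b_{i}$, $\epsilon$ and $\lct_{G}(X)$ — and feeding $\lct_{G}(X)>\tfrac{n}{n+1}$ into this should force the displayed inequality strictly, and $\lct_{G}(X)\ge\tfrac{n}{n+1}$ the non-strict version.

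I expect this last step to carry essentially all of the difficulty: establishing the sharp numerical implication $\lct_{G}(X)>\tfrac{n}{n+1}\Rightarrow(n+1)\bar{\mathcal L}^{\,n}\!\cdot\mathcal D>\bar{\mathcal L}^{\,n+1}$, that is, making the Seshadri-constant/$\alpha$-invariant comparison precise (a nef lower bound such as $\bar{\mathcal L}^{\,n+1}\ge(n+1)\epsilon(-K_{X})^{n}$ alone does not suffice, so finer control of the vertical divisor $\mathcal D$ is needed). By contrast, passing to the $G$-equivariant statement is essentially free: the only role of $G$ is that normalization, compactification, the $\mathbb G_{m}$-weight decomposition and the Seshadri constant are all canonically attached to the data, so every divisor entering the proof — $\mathcal D$, the $F_{i}$, the $D_{m}$ — is $G$-invariant and $\lct_{G}(X)$ legitimately replaces $\lct(X)$ throughout. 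Likewise the only use of characteristic zero is the log canonical threshold identity above, via resolution of singularities, so for $n\le 3$ the result persists in positive characteristic.
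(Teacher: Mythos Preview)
Your handling of the $G$-equivariant part is correct and is exactly how the paper proceeds: from a $G$-equivariant test configuration one extracts a $G$-invariant flag ideal $\mathcal{J}=\sum I_i t^i$ resolving $\mathcal{X}\dashrightarrow X\times\mathbb{A}^1$, the ideal $I_0$ is then $G$-invariant, and its log canonical threshold is that of the corresponding $G$-invariant sublinear system of $|-mK_X|$ (via \cite[Example~9.2.23]{Laz04}), so $\lct_G(X)$ legitimately replaces $\lct(X)$ in the Seshadri bound. Your remark on characteristic zero is also the paper's.

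The core of the argument, however, is not what you sketch, and your sketch has a real gap. The paper does \emph{not} work on $\bar{\mathcal X}$ with the nef threshold $\epsilon=\sup\{c:\bar{\mathcal L}-c\,\mathcal X_0\text{ nef}\}$ you introduce; that invariant does not see the flag structure and has no visible link to $\lct_G(X)$. Instead one passes to the flag-ideal blow-up $\mathcal{B}=\mathrm{Bl}_{\mathcal J}(X\times\mathbb{A}^1)$ with semiample $\mathcal{L}(-E)$, and the relevant Seshadri constant is $\Sesh(\mathcal J;(X\times\mathbb{A}^1,-K_{X\times\mathbb{A}^1}))$. The connection to $\lct$ is Proposition~\ref{prop:first}: for $c<\Sesh(I_0)$ one produces $D$ with $mD\in|I_0^{mc}(-mK_X)|$ and runs the log canonical threshold through inversion of adjunction to get $\Sesh(\mathcal J)\le\lct(X)^{-1}\min_i\{(a_i-b_i+1)/c_i\}$. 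Nor is the DF invariant handled by your single inequality $(n+1)\bar{\mathcal L}^n\!\cdot\mathcal D>\bar{\mathcal L}^{n+1}$; the paper splits it into a first term $-(\bar{\mathcal L}-E)^n\!\cdot\bar{\mathcal L}\ge0$ (Proposition~\ref{prop:positivity_first_term}, via a polynomial identity) and a second term $(\bar{\mathcal L}-E)^n\!\cdot((n+1)rK_{\mathcal B/X\times\mathbb{A}^1}-nE)$, whose positivity follows once $\tfrac{n+1}{n}K_{\mathcal B/X\times\mathbb{A}^1}-\Sesh(\mathcal J)\,E$ is effective (Proposition~\ref{prop:second} together with Lemma~\ref{eq:positivity_intersection}). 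It is this specific two-term decomposition, combined with the discrepancy-coefficient Seshadri bound, that makes the constant $\tfrac{n}{n+1}$ appear; you have correctly located where the difficulty lies but have not supplied the mechanism that resolves it.
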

\noindent
Here, we introduced new notions of \textit{$G$-equivariant K-stability} (
resp.\ \textit{$G$-equivariant K-semistability}), which are a priori weaker than the original notions of K-stability (resp.\ K-semistability)
\footnote{We expect that actually they are equivalent to the original, though we do not know the proof.}.
Their definitions will be explained in Section \ref{subsec:Kstab}.
Then, we have the following corollary thanks to the theorem of Matsushima \cite{Mat57} again. 
\begin{Cor}\label{aut:ss}
Let $X$ be a smooth Fano manifold over $\mathbb{C}$ with $\dim(X)=n$ and 
suppose that $\lct_{G}(X) > \frac{n}{n+1}$ with some connected compact subgroup $G\subset \Aut(X)$. 
Then, $\Aut(X)$ is semisimple. 
\end{Cor}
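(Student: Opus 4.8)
The plan is to combine Theorem \ref{Thm:main_g} with the structure theory of reductive groups and Matsushima's theorem, exactly in the spirit of the remarks following Corollary \ref{aut:fin}. First I would recall that since $X$ is a smooth Fano manifold, $\Aut(X)$ is a linear algebraic group over $\mathbb{C}$, and its identity component $\Aut^0(X)$ is therefore affine and connected. By Theorem \ref{Thm:main_g} applied with the given connected compact $G$, the pair $(X,\mathcal{O}_X(-K_X))$ is $G$-equivariantly K-stable. The key point is that $G$-equivariant K-stability forbids non-trivial $G$-equivariant one-parameter degenerations, and in particular (taking product test configurations coming from one-parameter subgroups of the centralizer of $G$) it rules out any non-trivial algebraic one-parameter subgroup $\lambda\colon \mathbb{G}_m\to \Aut^0(X)$ that commutes with $G$; such a $\lambda$ would produce a non-trivial product test configuration which is automatically $G$-equivariant, contradicting $G$-equivariant K-stability via the Donaldson--Futaki weight computation (the Futaki invariant of a product configuration being the classical Futaki character, which vanishes is impossible once we have strict K-stability — more precisely a non-trivial product configuration has $\DF\le 0$, contradicting strict positivity). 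Hence the centralizer $Z_{\Aut^0(X)}(G)$ contains no non-trivial $\mathbb{G}_m$.

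Next I would translate this into a statement about the radical. Let $R$ denote the radical of $\Aut^0(X)$, i.e.\ its largest connected normal solvable subgroup; its unipotent part $R_u$ is the unipotent radical and $R/R_u$ is a torus. The claim is that $R$ is trivial, which is equivalent to $\Aut^0(X)$ being semisimple (and then $\Aut(X)$ is semisimple in the sense that $\Aut^0(X)$ is, which is the assertion). To see $R$ is trivial, I would use that $G$, being compact and connected, is contained in a maximal compact subgroup $K$ of $\Aut^0(X)$; by Matsushima's theorem \cite{Mat57} — valid here since $\alpha_G(X)>\frac{n}{n+1}>0$ guarantees a K\"ahler--Einstein metric by Fact \ref{fact:tian}, hence $\Aut^0(X)$ is reductive — the group $\Aut^0(X)$ is the complexification $K^{\mathbb{C}}$ of $K$. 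Write $K = Z(K)^0 \cdot K_{ss}$ with $K_{ss}$ semisimple and $Z(K)^0$ a torus; complexifying, $\Aut^0(X) = T \cdot H$ with $T = (Z(K)^0)^{\mathbb{C}}$ a central torus and $H$ semisimple, and $R = T$. If $T$ were non-trivial it would contain a non-trivial $\mathbb{G}_m$, and since $T$ is central it commutes with $G$; this contradicts the previous paragraph. Therefore $T$ is trivial, $\Aut^0(X)$ is semisimple, and $\Aut(X)$ is semisimple.

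I expect the main obstacle to be making precise the passage ``$G$-equivariant K-stability $\Rightarrow$ no non-trivial central $\mathbb{G}_m$.'' One must check that a one-parameter subgroup $\lambda$ of the central torus gives rise to a bona fide test configuration which is $G$-equivariant in the sense of Section \ref{subsec:Kstab} (this is immediate because $\lambda$ commutes with $G$, so the $G$-action extends trivially over the $\mathbb{A}^1$-family), that this test configuration is non-trivial when $\lambda$ is, and that its Donaldson--Futaki invariant is $\le 0$ (it equals, up to a positive constant, the value of the Futaki character on the generator of $\lambda$, which for a product configuration is an honest character and in particular not strictly positive). Strict $G$-equivariant K-stability then forces $\lambda$ to be trivial. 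A secondary point to state carefully is the reduction ``$\alpha_G>0 \Rightarrow X$ Fano admits KE $\Rightarrow \Aut^0(X)$ reductive'': this is exactly the chain Fact \ref{fact:tian} $+$ Matsushima \cite{Mat57}, and it is what licenses the use of the complexification description $\Aut^0(X)=K^{\mathbb{C}}$ in the second paragraph. Everything else is routine algebraic group theory.
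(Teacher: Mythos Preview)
Your proof is correct and follows essentially the same route as the paper: Theorem~\ref{Thm:main_g} yields $G$-equivariant K-stability, which (as the paper observes in Section~\ref{subsec:Kstab}) rules out any $\mathbb{G}_m\subset\Aut(X)$ commuting with $G$; combining this with reductivity of $\Aut^0(X)$ via Fact~\ref{fact:tian} and Matsushima \cite{Mat57} forces the central torus to be trivial, hence $\Aut(X)$ is semisimple --- exactly the argument the paper sketches by citing \cite[Chapter~1, Theorem~17.10]{Mil05}. One small slip to fix: it is not true that a non-trivial product test configuration automatically has $\DF\le 0$; the correct point is that $\DF(\lambda^{-1})=-\DF(\lambda)$, so at least one of $\lambda,\lambda^{-1}$ gives a non-almost-trivial $G$-equivariant test configuration with $\DF\le 0$, contradicting strict $G$-equivariant K-stability.
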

\noindent
We also have analytic proof of Corollary \ref{aut:ss}, as well as Corollary \ref{aut:fin}, which is explained in the last section.  
\begin{Ex}
In Example \ref{ex:toric_MU} (ii), $\Aut(X)$ is isomorphic to $\PGL(2, \mathbb{C})$, which is semisimple. 
\end{Ex}
\begin{Rem}
In Example \ref{ex:toric_MU} (i), $\Aut(X)$ is \textit{not} semisimple, although $\alpha_G(X)=1$.
In fact, $G$ is \textit{not} connected.  
Therefore, the connectedness assumption of $G$ is necessary in Corollary \ref{aut:ss}. 
\end{Rem}
We have two keys to the algebro-geometric proof of Theorem \ref{Thm:main} and \ref{Thm:main_g};
one is a relation between the log canonical thresholds and the Seshadri constants, and the other is an estimate of the Donaldson-Futaki invariants.
The Seshadri constant is also a key in \cite{HKLP10}.
They used bend-and-break techniques and their related consequences, to
yields the necessary estimates of the Seshadri constants. 
The estimate of the Donaldson-Futaki invariants is an application of the first author's formula \cite{Od09} to compute them.

This paper is organized as follows.
In Section \ref{sec:Preliminary}, we recall the definitions of terminologies and facts needed for the proof. 
In Section \ref{lct.Sesh}, we prove the first step.
In Section \ref{formula}, we prove the second step.
In Section \ref{proof}, we integrate the materials to complete the proof of theorems and corollaries.

\vspace{0.3cm}
\noindent
\textbf{Acknowledgements.}
Y.O is supported by the Grant-in-Aid for Scientific Research (KAKENHI No.\ 21-3748) and the Grant-in-Aid for JSPS fellows. Y.S is supported by MEXT, Grant-in-Aid for Young Scientists (B), No.\ 22740041 and he is also a member of the Global COE program ``Education and Research Hub for Mathematics-for-Industry" in Kyushu University. 

This work was started roughly in the conference of ``Younger Generation in Complex and Algebraic Geometry (joint conference with ``Complex Analysis  in Kumamoto")" in Kumamoto, Japan. We thank the organizers for inviting us. 
This work is done partially during the second author's visit to Postech, Korea, in August 2010. He thanks Professor Jihun Park for his kind hospitality and useful comments on this work. 
We also thank Professor Hiraku Nakajima for pointing out the analytic proof of the finiteness of $\mathrm{Aut}(X)$ (Corollary \ref{aut:fin}) to us.

\section{Preliminary}\label{sec:Preliminary}
In this section, we make clear the definitions of the terminologies in the introduction.
We call $X$ a $\mathbb{Q}$-Fano variety if  $-K_X$ is an ample $\mathbb{Q}$-Cartier $\mathbb{Q}$-divisor.

\subsection{The log-canonicity and the log canonical thresholds}
Consult \cite{Kol95} and the textbook \cite[Section 9]{Laz04} for the details. 
Let $(X,D)$ be a pair of a normal variety $X$ and an effective $\mathbb{Q}$-divisor $D$.
Throughout this subsection, we assume that $K_X$ is $\mathbb{Q}$-Cartier.
Let $\pi:X'\to X$ be a log resolution of $D$, i.e., $\pi$ is a proper birational morphism such that $X'$ is smooth and the divisor $\pi^*D+E$ has a simple normal crossing support, where $E$ is the exceptional divisor of $\pi$.
Let $K_{X'/X}:=K_{X'}-\pi^*K_{X}$. 
Then, we denote 
$$
	K_{X'/X}-\pi^*D=\sum a_{i}E_{i}, 
$$
where $a_{i} \in \mathbb{Q}$ and $E_{i}$ runs over the set of divisors of $X'$ supported on the exceptional locus or the support $\Supp(\pi^{-1}_{*}D)$ of the strict transform of $D$. 
The pair $(X,D)$ is called \textit{log canonical} 
if and only if $a_{i}\ge -1$ for any $E_{i}$. This notion is independent of 
the choice of log resolution. 
From the definition (\ref{eq:lct_D}), the log canonical threshold is 
determined as 
$$
	\lct(X,D)=\min_{E_{i}\subset X'}
	\bigg\{ 
		\frac{1+\mathrm{ord}_{E_{i}}(K_{X'/X})}{\mathrm{ord}_{E_{i}}(D)} 
	\bigg\},
$$
where $\mathrm{ord}_{E_{i}}(K_{X'/X})=a_{i}$ and $\mathrm{ord}_{E_{i}}(D)$ is the coefficient of $E_{i}$ in $\pi^{*}D$. 
The log canonical threshold is also independent of the choice of log resolution. 
More generally, if $\pi'$ is a proper birational morphism (possibly not a log resolution),  the fact that such $\pi' \colon X' \rightarrow X$ is dominated by a log resolution implies 
\begin{equation}\label{eq:lct_inequality} 
	\lct(X,D) \le \min_{E'_{i}\subset X'}
	\bigg\{
		\frac{1+\mathrm{ord}_{E'_{i}}(K_{X'/X})}{\mathrm{ord}_{E'_{i}}(D)}
	\bigg\}
\end{equation}
where $E'_{i}$ runs over the set of divisors of $X'$ supported on the exceptional locus 
or the support  $\Supp(\pi'^{-1}_{*}D)$ of the strict transform of $D$. 
This is one of the essential observations in the first step of the proof.

The log canonical thresholds can be defined similarly for linear systems and ideals by using their log resolutions (cf. \cite{Laz04}) as follows. 
Let $L$ be an ample line bundle on $X$.
Let $\Sigma$ be a sublinear system of $|L|$.
We say that a proper birational morphism $\pi:X'\to X$ is a \textit{log resolution of} $\Sigma$ if $X'$ is smooth and there exist an effective divisor $F$ on $X'$ and a linear system $\Sigma' \subset |\pi^*L-F|$ such that
$$
	\pi^* \Sigma =F+\Sigma' ,
$$
$F+E$ has a simple normal crossing support and $\Sigma'$ is base point free, where $E$ is the exceptional divisor of $\pi$.
Then, we denote
$$
	K_{X'/X}-cF=\sum a_{i}E_{i}+D
$$
where $E_{i}$ are exceptional divisors of $\pi$ and $D$ is non-exceptional parts. 
We say that a pair $(X,\Sigma)$ is \textit{log canonical} if $a_{i}\ge -1$ for any $E_i$.
Then, we can define the \textit{log canonical threshold}  $\lct(X,\Sigma)$ by
\begin{equation}\label{eq:alg_lct_LS}
\lct(X,\Sigma):=\sup\{c\mid K_{X'/X}-cF=\sum a_{i}E_{i}+D \mbox{ with } a_{i}\geq -1\}. 
\end{equation}
Here, $E_{i}$ and $D$ are as above. 
We note that the definition of $\lct(X, \Sigma)$ in the appendix \cite{CSD08} uses the complex singularity exponent, but it is equivalent to (\ref{eq:alg_lct_LS}).
The equivalence follows from a standard argument for the correspondence of the complex singularity exponent and the log canonical threshold for divisors.
We note that the log canonical threshold $\lct(X,\Sigma)$ coincides with $\lct(X,D)$ for some  effective $\mathbb{Q}$-divisor $D$, which is $\mathbb{Q}$-linearly equivalent 
to a member of $\Sigma$, by \cite[Proposition 9.2.26]{Laz04}. Furthermore, $\lct(X,\Sigma)$ also coincides with the log canonical threshold for a coherent ideal sheaf $\lct(X,I)$, where $I$ is the base ideal sheaf of $\Sigma$ by \cite[Example 9.2.23]{Laz04}. 
Let $I\subset \mathcal{O}_X$ be a non-zero ideal of $X$.
We say that $\pi:X'\to X$ as before is \textit{a log resolution} of $I$ if $X'$ is smooth and there exists an effective divisor $F$ on $X'$ such that
$$
	\pi^{-1}I=\mathcal{O}_{X'}(-F),
$$
$F+E$ has a simple normal crossing support.
Then, we can define $\lct(X,I)$ as before as 
$$
\lct(X,I):=\sup\{c\mid K_{X'/X}-cF=\sum a_{i}E_{i}+D \mbox{ with } a_{i}\geq -1\}. 
$$ 
Here, $E_{i}$ are exceptional divisors of $\pi$ and $D$ is a non-exceptional part, again.

\subsection{K-stability}\label{subsec:Kstab}
Consult \cite[Chapter 2, especially 2.3]{Don02}, \cite[Section 3]{RT07} or \cite[section  2]{Od09} for more details. 
Let $(X,L)$ be an $n$-dimensional polarized variety. 
A \textit{test configuration} (resp.\ a \textit{semi test configuration}) for $(X,L)$ is a polarize scheme $(\mathcal{X},\mathcal{L})$ with a $\mathbb{G}_m$-action on $(\mathcal{X},\mathcal{L})$ and a proper flat morphism $\Pi\colon \mathcal{X}\to\mathbb{A}^1$ such that (i) $\Pi$ is $\mathbb{G}_m$-equivariant for the multiplicative action of $\mathbb{G}_m$ on $\mathbb{A}^1$, (ii) $\mathcal{L}$ is relatively ample (resp. relatively semi-ample), and (iii) $(\mathcal{X},\mathcal{L})\mid_{\Pi^{-1}(\mathbb{A}^1-\{0\})}$ is $\mathbb{G}_m$-equivariantly isomorphic to  $(X,L^{\otimes r})\times (\mathbb{A}^1-\{0\})$ for some positive integer $r$. 
If $\mathcal{X}\simeq X\times \mathbb{A}^1$, we call $(\mathcal{X},\mathcal{L})$ a \textit{product test configuration}. 
Moreover, if $\mathbb{G}_m$ acts trivially, we call it a trivial test configuration.
A test configuration $(\mathcal{X}, \mathcal{L})$ is said to be \textit{almost trivial}
if $\mathcal{X}$ is $\mathbb{G}_m$-equivariantly isomorphic to a trivial test configuration away from a closed subscheme of codimension at least $2$ (cf.\ 
\cite[Definition 3.3]{Od12}, \cite[Definition 1]{Stp11}). 

Let $P(k):=\dim H^0(X,L^{\otimes k})$, which is a polynomial in $k$ of degree $n$ due to Riemann-Roch theorem. 
Since the $\mathbb{G}_m$-action preserves the central fibre $\mathcal{X}_0$ of $\mathcal{X}$, $\mathbb{G}_m$ acts also on $H^0(\mathcal{X}_0,\mathcal{L}^{\otimes K}\mid_{\mathcal{X}_0})$, where $K\in \mathbb{Z}_{>0}$. 
Let $w(Kr)$ be the weight of the induced action on the highest exterior power of $H^0(\mathcal{X}_0,\mathcal{L}^{\otimes K}\mid_{\mathcal{X}_0})$, which is a polynomial of $K$ of degree $n+1$ due to the Mumford's droll Lemma (cf.\ \cite[Lemma 2.14]{Mum77} and \cite[Lemma 3.3]{Od09}) and Riemann-Roch theorem.
Here, the \textit{total weight} of an action of $\mathbb{G}_{m}$ on some finite-dimensional vector space is defined as the sum of all weights, where the \textit{weights} mean the exponents of eigenvalues which should be powers of $t\in \mathbb{A}^1$.
Let us take $rP(r)$-th power and 
SL-normalize the action of $\mathbb{G}_{m}$ on 
$(\Pi_{*}\mathcal{L})|_{\{0\}}$, then the corresponding normalized weight on 
$(\Pi_{*}\mathcal{L}^{\otimes K})|_{\{0\}}$ 
is $\tilde{w}_{r,Kr}:=w(k)rP(r)-w(r)kP(k)$, where $k:=Kr$. It is a 
polynomial of form 
$\sum_{i=0}^{n+1}e_{i}(r)k^{i}$ of degree $n+1$ in $k$ for $k \gg 0$, with coefficients which are also 
polynomial 
of degree $n+1$ in $r$ 
for $r \gg 0$ : $e_{i}(r)=\sum_{j=0}^{n+1}e_{i,j}r^{j}$ for $r \gg 0$. 
Since the weight is normalized, $e_{n+1,n+1}=0$. 
The coefficient $e_{n+1,n}$ is 
called the \textit{Donaldson-Futaki invariant} of the test configuration, which we denote by $\DF(\mathcal{X},\mathcal{L})$. 
For an arbitrary \textit{semi} test configuration $(\mathcal{X},\mathcal{L})$ 
of order $r$, we can also define the Donaldson-Futaki invariant as well by setting $w(Kr)$ as the total weight of the  induced action on $H^{0}(\mathcal{X}, \mathcal{L}^{\otimes K})/tH^{0}(\mathcal{X}, \mathcal{L}^{\otimes K})$ (cf. \cite{RT07}).
We say that $(X,L)$ is K-semistable if and only if $\DF\ge 0$ for any non-trivial test configuration.
We say that $(X,L)$ is K-stable if and only if $\DF>0$ for all test configuration which are not almost trivial. 
We also say that $(X,L)$ is K-polystable if and only if $\DF\ge 0$ for all test configuration which are not almost trivial, and $\DF=0$ only if a test configuration is isomorphic to a product test configuration away from a closed subscheme of codimension at least $2$. 
\footnote{K-stability and K-polystability in this paper are slightly weaker than the original in \cite{Don02} to  avoid the pathological test configurations found recently by Li-Xu \cite[Example 1]{LX11}. }

Now, we define $G$-equivariant K-stability (resp.\ $G$-equivariant K-semistability) as follows.
We say that a test configuration $(\mathcal{X,L})$ of a polarized variety $(X,L)$ is \textit{$G$-equivariant} if it is equipped with an extension of the natural $G$-action on $(\mathcal{X},\mathcal{L})|_{\Pi^{-1}(\mathbb{A}^{1}-\{0\})}$ (which fixes coordinates of $\mathbb{A}^{1}$) to the whole space $(\mathcal{X},\mathcal{L})$. 
We note that the action of $G$ naturally commutes with the $\mathbb{G}_{m}$-action.
Then, $G$-equivariant K-stability (resp.\ $G$-equivariant K-semistability) in Theorem \ref{Thm:main_g} means that the Donaldson-Futaki invariant of an arbitrary $G$-equivariant test configuration $(\mathcal{X},\mathcal{L})$, which is not almost trivial (resp. trivial),  is positive (resp.\ non-negative). 
Therefore, $G$-equivariant K-stability of $(X,L)$ implies that $\Aut(X,L)$ does not include any algebraic subgroup which is isomorphic to  $\mathbb{G}_{m}$ and commutes with $G$.

We end this subsection with a small remark on an extension of the framework above. 
If we take a test configuration (resp.\ semi test configuration) $(\mathcal{X}, \mathcal{L})$, 
we can think of a new test configuration (resp.\ semi test configuration)  $(\mathcal{X}, \mathcal{L}^{\otimes a})$ with $a \in \mathbb{Z}_{>0}$. 
From the definition of Donaldson-Futaki invariant above, we easily see that 
$\DF((\mathcal{X},\mathcal{L}^{\otimes a}))=a^{n}\DF((\mathcal{X},\mathcal{L}))$. 
Therefore, we can define  K-stability (also K-polystability and K-semistability) of a pair $(X,L)$ of a projective scheme $X$ and an ample \textbf{$\mathbb{Q}$-line} bundle $L$. 

\subsection{Seshadri constants}
Let $J\subset \mathcal{O}_X$ be a coherent ideal on $X$.
The Seshadri constant of $J$ with respect to an ample $\mathbb{Q}$-line bundle $L$ 
is defined by 
$$
	\mathrm{Sesh}(J;(X, L)):=\sup \{c>0\mid \pi^*L(-cE)\,\,\mbox{is ample}\},
$$
where $\pi\colon X'\to X$ is the blow up of $X$ along $J$.

\subsection{Flag ideals}\label{subsec:flag}
See \cite[Section 3]{Od09} and \cite[Section 3]{RT07} for the details.
We say that  a coherent ideal $\mathcal{J}\subset \mathcal{O}_{X\times \mathbb{A}^{1}}$  is a \textit{flag ideal} if it is of the form 
\[
\mathcal{J}=I_{0}+I_{1}t+I_{2}t^{2}+\cdots +I_{N-1}t^{N-1}+(t^N), 
\]
where $I_{0} \subset I_{1} \subset \cdots I_{N-1} \subset \mathcal{O}_{X}$ is a sequence of coherent ideals of $X$.
By using a flag ideal, we construct a special class of semi test configurations as follows.
For a flag ideal $\mathcal{J}$, let $(\mathcal{B}:=Bl_{\mathcal{J}}(X\times \mathbb{A}^{1}),\mathcal{L}(-E))$ be the blow up $\Pi$ of $(X\times \mathbb{A}^{1})$ along $\mathcal{J}$, where $\mathcal{O}(-E)=\Pi^{-1}\mathcal{J}$, $\mathcal{L}:=\Pi^*p_1^*L^{\otimes r}$ 
with $r \in \mathbb{Z}_{>0}$ and $p_{1}:X\times \mathbb{A}^{1}\to X$.
We assume that $\mathcal{L}(-E)$ is semiample. 
Then, $(\mathcal{B},\mathcal{L}(-E))$ with the induced action from the usual action of $\mathbb{G}_m$ on $X\times \mathbb{A}^1$ (i.e, $\mathbb{G}_m$ acts only the second factor) defines a semi test configuration.
Remark that if $\mathcal{J}=(t^N)$, then $(\mathcal{B},\mathcal{L}(-E))$ defines 
{\color{black}{a trivial}} test configuration, because the blow up morphism $\mathcal{B}\to X\times \mathbb{A}^1$ is trivial.
The following says that it suffices to consider all semi test configurations only type of $(\mathcal{B},\mathcal{L}(-E))$ in order to show K-stability. 
Note that the following proposition is a corrected version of \cite[Corollary 3.11 (ii)]{Od09} in \cite{Od12}. 

\begin{Prop}[{cf.\ \cite[Corollary 3.11 (ii)]{Od09}, \cite[Corollary 3.6]{Od12}}] \label{b.e}
Suppose that $X$ is normal. 
$(X,L)$ is K-stable if and only if $DF(\mathcal{B},\mathcal{L}(-E))>0$ for all flag ideals which are not of the form $(t^N)$, and $r \in \mathbb{Z}_{>0}$ such that $\mathcal{B}$ is 
normal and $\mathcal{L}(-E)$ is semi-ample.
\end{Prop}
Remark that the normality of $\mathcal{B}$ can be assumed without loss of generality.
In fact, by normalizing, $(\mathcal{B}, \mathcal{L}(-E))$ can be made a test configuration with respect to some (possibly different) flag ideal with smaller Donaldson-Futaki invariant.

\section{The log canonical thresholds and Seshadri constants}\label{lct.Sesh}
We prove the first step of the proof. 
Let $X$ be a $\mathbb{Q}$-Fano variety. 
Let $\mathcal{J}$ be a flag ideal.
Let $\Pi:\mathcal{B}\to X\times \mathbb{A}^1$ be the blow up of $X\times \mathbb{A}^1$ along $\mathcal{J}$.
Assume that $\mathcal{B}$ is normal.
We denote
\begin{eqnarray*}
		K_{\mathcal{B}/X\times \mathbb{A}^1}
	&=&
		\sum_i a_iE_i,
	\\
		\Pi^{*}(X\times \{0\})
	&=&
		\Pi^{-1}_*(X\times \{0\})+\sum_i b_i E_i, 
	\\
		\Pi^{-1}\mathcal{J}
	&=&
		\mathcal{O}_{\mathcal{B}}(-\sum_i c_i E_i).
\end{eqnarray*}
Remark that these three divisors is supported only in the central fibre of $X\times \mathbb{A}^1$.
Then, the first step of the proof is as follows.
\begin{Prop}\label{prop:first}
Let $X$ be a $\mathbb{Q}$-Fano variety.
If $\lct(X)>0$, then we have
$$
	\mathrm{Sesh}(\mathcal{J}, (X\times \mathbb{A}^1, \mathcal{O}_{X\times \mathbb{A}^{1}}(-K_{X\times \mathbb{A}^1})))
	\le
	\frac{1}{\lct(X)}\min_i \bigg\{ \frac{a_i-b_i+1}{c_i}\bigg\}.
$$
\end{Prop}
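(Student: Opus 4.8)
The plan is to compare the two sides by exhibiting, for every $c$ with $\Pi^*(-K_{X\times\mathbb{A}^1})(-cE)$ ample (where $E=\sum_i c_iE_i$), a divisor on $X$ contradicting $\lct(X)>0$ unless the asserted inequality holds. Concretely, suppose $\Sesh(\mathcal{J},(X\times\mathbb{A}^1,-K_{X\times\mathbb{A}^1}))>\frac{1}{\lct(X)}\min_i\frac{a_i-b_i+1}{c_i}$; pick the index $i_0$ realizing the minimum and a rational $c$ with $\frac{1}{\lct(X)}\cdot\frac{a_{i_0}-b_{i_0}+1}{c_{i_0}}<c<\Sesh(\mathcal{J},\dots)$, so that $M:=\Pi^*(-K_{X\times\mathbb{A}^1})(-cE)$ is an ample $\mathbb{Q}$-line bundle on $\mathcal{B}$. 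The key point is that $\Pi^*(-K_{X\times\mathbb{A}^1})=-K_{\mathcal{B}}+\sum a_iE_i$ (since $-K_{X\times\mathbb{A}^1}=p_1^*(-K_X)$ and $K_{\mathcal{B}/X\times\mathbb{A}^1}=\sum a_iE_i$), hence $M$ is $\mathbb{Q}$-linearly equivalent to $-K_{\mathcal{B}}+\sum_i(a_i-cc_i)E_i$.

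Next I would pass to a general member. Because $M$ is ample on $\mathcal{B}$, a sufficiently divisible multiple is very ample; restricting to the strict transform $\mathcal{X}_0:=\Pi^{-1}_*(X\times\{0\})\cong X$ and using that $\Pi^*(X\times\{0\})=\mathcal{X}_0+\sum b_iE_i$, one produces an effective $\mathbb{Q}$-divisor $\Gamma$ on $X$ together with a $\mathbb{Q}$-divisor on $\mathcal{B}$ whose restriction to $\mathcal{X}_0$ (or rather, whose pushforward under $\Pi|_{\mathcal{X}_0}$) controls the coefficients $\mathrm{ord}_{E_{i_0}}$. The cleaner route is to argue via the inequality (\ref{eq:lct_inequality}): the proper birational morphism $\Pi$ (or $\Pi$ followed by restriction bookkeeping) is dominated by a log resolution, so for an effective divisor $D\in|-mK_X|$ pulled back appropriately, $\lct(X,\tfrac1m D)\le\frac{1+\mathrm{ord}_{E_{i_0}}(K_{\mathcal{B}/X\times\mathbb{A}^1}-\text{(central fibre correction)})}{\mathrm{ord}_{E_{i_0}}(\tfrac1m\Pi^*D)}$; the numerator is where $a_{i_0}-b_{i_0}+1$ appears (the $-b_{i_0}$ being the discrepancy shift from writing things relative to $\mathcal{X}_0$ rather than $X\times\{0\}$), and the denominator is where $c_{i_0}$ appears once one uses that $M$ ample forces the coefficient of $E_{i_0}$ in the relevant effective divisor to be at least $(c - \varepsilon)c_{i_0}$ for the chosen $D$.

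Assembling these, the chosen $D$ satisfies $\lct(X,\tfrac1m D)\le\frac{a_{i_0}-b_{i_0}+1}{c\,c_{i_0}}<\lct(X)$ by the choice of $c$, contradicting the definition of $\lct(X)$ as an infimum over all such $D$ (here I use the characterization (\ref{eq:alpha_finite}) with trivial $G$). Taking $c\to\Sesh(\mathcal{J},\dots)$ then yields the stated bound.

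The main obstacle I anticipate is the passage from the ample $\mathbb{Q}$-line bundle $M$ on the total space $\mathcal{B}$ to an honest effective $\mathbb{Q}$-divisor $D$ on $X$ linearly equivalent to $-mK_X$ with the right multiplicity along $\Pi(E_{i_0})$: one must choose a general section of a large multiple of $M$, restrict to the central fibre $\mathcal{X}_0$, and carefully track how the coefficients $a_i$, $b_i$, $c_i$ interact under this restriction, in particular accounting for components $E_i$ that meet $\mathcal{X}_0$ versus those contained in $\mathcal{X}_0$ or disjoint from it, and for the fact that $\mathcal{B}$ is only assumed normal (so some care with $\mathbb{Q}$-Cartier-ness of $K_{\mathcal{B}}$ and with Bertini on a possibly singular variety is needed). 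Handling this bookkeeping cleanly — probably by working on a log resolution $\mathcal{B}'\to\mathcal{B}$ and invoking (\ref{eq:lct_inequality}) — is the technical heart of the argument.
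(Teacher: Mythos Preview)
Your overall strategy --- pick a rational $c$ below the Seshadri constant, use ampleness to manufacture an effective $D\sim_{\mathbb{Q}}-K_X$, and derive $\lct(X,D)<\lct(X)$ --- is the paper's as well. But your execution misses the two ingredients that dissolve precisely the ``main obstacle'' you flag, and your first route contains an error.

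First, the paper never works with ampleness on $\mathcal{B}$. It uses the identity
\[
\Sesh\bigl(\mathcal{J},(X\times\mathbb{A}^1,-K_{X\times\mathbb{A}^1})\bigr)=\min_j\,\Sesh\bigl(I_j,(X,-K_X)\bigr)
\]
(cf.\ \cite[Corollary~5.8]{RT07}) to reduce to bounding $\Sesh(I_0,(X,-K_X))$. For $c<\Sesh(I_0,(X,-K_X))$, ampleness of $\pi^*(-K_X)-cE'$ on $\mathrm{Bl}_{I_0}X$ gives $H^0(X,I_0^{mc}\mathcal{O}_X(-mK_X))\neq 0$ for divisible $m$, producing $D$ on $X$ directly with $I_{mD}\subset I_0^{mc}$. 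No restriction from the total space, no tracking of which $E_i$ meet the central fibre.

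Second, the bridge from $\lct(X,D)$ to the coefficients $a_i,b_i,c_i$ on $\mathcal{B}$ is inversion of adjunction:
\[
\lct(X,D)=\lct\bigl((X\times\mathbb{A}^1,\,X\times\{0\});\,D\times\mathbb{A}^1\bigr).
\]
The boundary $X\times\{0\}$ is exactly what contributes the $-b_i$ when one applies (\ref{eq:lct_inequality}) to $\Pi\colon\mathcal{B}\to X\times\mathbb{A}^1$; one then passes from $D\times\mathbb{A}^1$ to $\mathcal{J}$ via the inclusions $I_{D\times\mathbb{A}^1}\subset I_0^{mc}\subset\mathcal{J}^{mc}$. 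Your phrase ``discrepancy shift from writing things relative to $\mathcal{X}_0$'' gestures at this, but your displayed inequality applies (\ref{eq:lct_inequality}) to $(X,\tfrac1m D)$ using the valuation $\mathrm{ord}_{E_{i_0}}$, which is a valuation over $X\times\mathbb{A}^1$, not over $X$; without the inversion-of-adjunction step this is not a valid instance of (\ref{eq:lct_inequality}).

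Finally, your claim $\mathcal{X}_0:=\Pi^{-1}_*(X\times\{0\})\cong X$ is wrong in general: since $\mathcal{J}|_{t=0}=I_0$, the strict transform is (a partial normalization of) $\mathrm{Bl}_{I_0}X$, not $X$. This already undermines your restriction route before the bookkeeping you anticipate.
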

\begin{proof}
Since
$$
	\mathrm{Sesh}(\mathcal{J}, (X\times \mathbb{A}^1, \mathcal{O}_{X\times \mathbb{A}^1}(-K_{X\times \mathbb{A}^1})))
	=
	\min_j \big\{ \mathrm{Sesh}(I_j, (X,\mathcal{O}_{X}(-K_X))) \big\}
$$
(cf. \cite[Corollary 5.8]{RT07}), it suffices to show that
$$
	\mathrm{Sesh}(I_0, (X,\mathcal{O}_{X}(-K_X)))
	\le
	\frac{1}{\lct(X)}\min_i 
	\bigg\{
		 \frac{a_i-b_i+1}{c_i}
	\bigg\}.
$$
Take $c\in \mathbb{Q}_{>0}$ so that $c<\mathrm{Sesh}(I_0, (X,\mathcal{O}_{X}(-K_X)))$.
Let $E$ be the exceptional divisor of the blow up $\pi:X'\to X$ along $I_0$.
Since $\pi^*(-K_X)-cE$ is ample, 
$H^0(X,I_{0}^{mc}\mathcal{O}_{X}(-mK_{X}))$ has positive dimension 
for sufficiently divisible positive integer $m$ and we can take a linear system 
$\Sigma$ which corresponds to that space $H^0(X,I_{0}^{mc}\mathcal{O}_{X}(-mK_{X}))$ as a subspace of $H^0(X,\mathcal{O}_{X}(-mK_{X}))$. Let us take an effective  $\mathbb{Q}$-divisor $D$ as $mD \in \Sigma$.

From now on, we work with $X\times \mathbb{A}^1$ instead of $X$.
For a pair $(Y,\Delta)$ of a normal algebraic variety $Y$ and an effective $\mathbb{Q}$-divisor $\Delta$ on $Y$, and an effective $\mathbb{Q}$-divisor $F$ on $Y$, we denote the log canonical threshold of $((Y, \Delta);F)$ 
$$
	\sup\{c\in \mathbb{Q}_{>0}\mid 
	(Y, (\Delta + cF))\,\,
	\mbox{is log canonical} \}
$$
by $\lct((Y,\Delta); F)$.
Then, we get
\begin{eqnarray}
	\nonumber
		\lct(X)
	&\le&
		\lct(X,D)
	\\
	\label{eq:lctD=lctDxA}
	&=&
		\lct((X\times \mathbb{A}^1, X\times \{0\}); 
		I_{D\times \mathbb{A}^1}
		)
	\\
	\label{eq:DA_cI0}
	&\le&
		\lct((X\times \mathbb{A}^1, X\times \{0\}); cI_0)
	\\
	\nonumber
	&\le&
		\lct((X\times \mathbb{A}^1, X\times \{0\}); c\mathcal{J})
	\\
	\nonumber
	&=&
		\frac{1}{c}\lct((X\times \mathbb{A}^1, X\times \{0\}); \mathcal{J})
	\\
	\label{eq:estimate_coefficients}
	&\le&
		\frac{1}{c}\min_i \bigg\{
			\frac{a_i-b_i+1}{c_i}
		\bigg\}.
\end{eqnarray}
The equality (\ref{eq:lctD=lctDxA}) follows from the inversion of adjunction of the log canonicity, which can be seen easily by taking the log resolution formed of $\widetilde{\pi}:X'\times \mathbb{A}^1\to X\times \mathbb{A}^1$ where $\pi:X'\to X$ is a log resolution of $(X,D)$ for this case. 
The inequality (\ref{eq:DA_cI0}) follows by taking a log resolution of the blow up $Bl_{I_0}(X\times \mathbb{A}^1)$ of $X\times \mathbb{A}^1$ along the ideal $I_0\subset \mathcal{O}_{X\times \mathbb{A}^1}$.
The last inequality (\ref{eq:estimate_coefficients}) follows from the inequality (\ref{eq:lct_inequality}).
In fact, 
$$
	\lct((X\times \mathbb{A}^1, X\times \{0\}); \mathcal{J})
	\le
	\min_i 
	\bigg\{
	\frac{1+\mathrm{ord}_{E_i}(K_{\mathcal{B}/X\times\mathbb{A}^1})-\mathrm{ord}_{E_i}(X\times \{0\})}{\mathrm{ord}_{E_i}(\mathcal{J})}
	\bigg\}. 
$$
Therefore, 
$$
	c\le  \frac{1}{\lct(X)}
	\min_i \bigg\{
		\frac{a_i-b_i+1}{c_i}
	\bigg\}
$$
for any $c<\mathrm{Sesh(I_0, (X,\mathcal{O}_{X}(-K_X)))}$.
The proof is completed. 
\end{proof}

\section{Estimates of the Donaldson-Futaki invariants}\label{formula}

We prove the second step of the proof of Theorem \ref{Thm:main} in this section. 
This is an application of the following formula in \cite{Od09} to compute the Donaldson-Futaki invariant for a semi test configuration $(\mathcal{B}, \mathcal{L}(-E))$ derived from a flag ideal $\mathcal{J}\subset \mathcal{O}_{X\times \mathbb{A}^1}$. 

\subsection{The formula for the Donaldson-Futaki invariants and its decomposition} 

Let us start from recalling the formula from \cite{Od09}. 

\begin{Thm}[{\cite[Theorem 3.2]{Od09}}]\label{DF.formula}
Let $X, L, \mathcal{J}, \mathcal{B}, \mathcal{L}$ and $E$ as before 
$($cf.\ Subsection \ref{subsec:flag}$)$.
Let $(\overline{\mathcal{B}}:=Bl_{\mathcal{J}}(X\times \mathbb{P}^{1}),\overline{\mathcal{L}}(-E))$ be its natural compactification, i.e., $\bar{\mathcal{L}}:=\Pi^*p_1^*L$ (extension of $\mathcal{L}$) 
where $\Pi\colon Bl_{\mathcal{J}}(X\times \mathbb{P}^1)\rightarrow X\times\mathbb{P}^1$ is the blow up morphism and $p_1$ is the projection morphism. 
Suppose that $\mathcal{L}(-E)$ on $\mathcal{B}$ is semi-ample.
Then, if $\mathcal{B}$ is normal,
we have
\begin{eqnarray}
	\nonumber
	&&
		2(n!)((n+1)!)\DF (\mathcal{B},\mathcal{L}(-E)) 
	\\
	\nonumber
	&\quad&
  		=-n(L^{n-1}.K_{X})(\overline{\mathcal{L}}(-E)^{n+1})
		+(n+1)(L^{n})
		(\overline{\mathcal{L}}(-E)^{n}.K_{\bar{\mathcal{B}}/\mathbb{P}^{1}}) 
	\\
	\nonumber
	&\quad&
		=
		-n(L^{n-1}.K_{X})(\overline{\mathcal{L}}(-E)^{n+1})+(n+1)(L^{n})
		(\overline{\mathcal{L}}(-E)^{n}.\Pi^{*}(p_{1}^{*}K_{X})) 
	\\
	\nonumber
	&&
		\quad
		+(n+1)
		(L^{n})(\overline{\mathcal{L}}(-E)^{n}.K_{\bar{\mathcal{B}}/X\times \mathbb{P}^{1}}). 
\end{eqnarray}
In the above, the intersection numbers $(L^{n-1}.K_X)$ and $(L^n)$ 
are taken on $X$. On the other hand, $K_{\bar{\mathcal{B}}/X\times \mathbb{P}^1}
:=K_{\bar{\mathcal{B}}}-\Pi^*K_{X\times \mathbb{P}^1}$ is an exceptional divisor 
on $\bar{\mathcal{B}}$ and thus 
$((\bar{\mathcal{L}})(-E))^{n}.\Pi^{*}(p_{1}^{*}K_{X}))$ 
and $((\bar{\mathcal{L}})(-E))^{n}.K_{\bar{\mathcal{B}}/X\times \mathbb{P}^{1}})$ 
are intersection numbers taken on $\bar{\mathcal{B}}$. 
\end{Thm}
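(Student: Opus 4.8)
The plan is to compute $\DF(\mathcal{B},\mathcal{L}(-E))$ directly from its definition as a coefficient of the doubly-normalized weight polynomial, and then to translate that coefficient into intersection numbers on the compactified total space $\overline{\mathcal{B}}$ via asymptotic Riemann--Roch. Write $P(k)=\dim H^{0}(X,L^{\otimes k})=a_{0}k^{n}+a_{1}k^{n-1}+O(k^{n-2})$ and let $w(k)=b_{0}k^{n+1}+b_{1}k^{n}+O(k^{n-1})$ be the total weight of the induced $\mathbb{G}_{m}$-action on $H^{0}(\mathcal{B}_{0},(\mathcal{L}(-E))^{\otimes k}|_{\mathcal{B}_{0}})$. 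Expanding the double normalization $\tilde{w}_{r,Kr}=w(k)rP(r)-w(r)kP(k)$ of Subsection \ref{subsec:Kstab} (with $k=Kr$) and extracting the coefficient of $k^{n+1}r^{n}$ gives
\[
\DF(\mathcal{B},\mathcal{L}(-E))=a_{1}b_{0}-a_{0}b_{1}.
\]
Asymptotic Riemann--Roch on $X$, with $c_{1}(X)=-K_{X}$, yields $a_{0}=(L^{n})/n!$ and $a_{1}=-(L^{n-1}.K_{X})/(2(n-1)!)$, so everything reduces to computing $b_{0}$ and $b_{1}$.

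The substantive step is to express the weight $w(k)$ as an equivariant Euler characteristic on $\overline{\mathcal{B}}$. Because the compactified family $\overline{\mathcal{B}}\to\mathbb{P}^{1}$ is $\mathbb{G}_{m}$-equivariantly isomorphic to the trivial family $X\times\mathbb{P}^{1}$ (with $\mathbb{G}_{m}$ acting only on the $\mathbb{P}^{1}$-factor) over a neighbourhood of the fibre at $\infty$, and $\overline{\mathcal{L}}(-E)$ restricts there to $p_{1}^{*}L$, I would push $\overline{\mathcal{L}}(-E)^{\otimes k}$ down to $\mathbb{P}^{1}$, realise the pushforward as a $\mathbb{G}_{m}$-linearized locally free sheaf trivialised near $\infty$, and compare its weight grading at $0$ with the one at $\infty$; equivalently, one computes the equivariant Hilbert series directly. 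This produces, for $k\gg 0$ and with the sign conventions of Subsection \ref{subsec:Kstab},
\[
w(k)=\chi(\overline{\mathcal{B}},\overline{\mathcal{L}}(-E)^{\otimes k})-\chi(X,L^{\otimes k})=\chi(\overline{\mathcal{B}},\overline{\mathcal{L}}(-E)^{\otimes k})-P(k).
\]
Here normality of $\mathcal{B}$ is used to control $H^{0}(\mathcal{B}_{0},\cdot)$ through the flat $k[t]$-module $H^{0}(\mathcal{B},\cdot)$, while semi-ampleness of $\mathcal{L}(-E)$, together with Mumford's droll lemma which already guarantees $w(k)$ is polynomial of degree $n+1$, lets one trade $h^{0}$ for $\chi$ and upgrade the $k\gg 0$ statement to a polynomial identity. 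I expect this identification --- pinning down the exact equivariant bookkeeping, including all signs and the contribution of the fibre at infinity --- to be the main obstacle; the rest is routine.

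Granting the displayed identity, I would expand by asymptotic Riemann--Roch on the $(n+1)$-dimensional variety $\overline{\mathcal{B}}$:
\[
\chi(\overline{\mathcal{B}},\overline{\mathcal{L}}(-E)^{\otimes k})=\frac{(\overline{\mathcal{L}}(-E)^{n+1})}{(n+1)!}\,k^{n+1}-\frac{(\overline{\mathcal{L}}(-E)^{n}.K_{\overline{\mathcal{B}}})}{2\,n!}\,k^{n}+O(k^{n-1}),
\]
so $b_{0}=(\overline{\mathcal{L}}(-E)^{n+1})/(n+1)!$. Subtracting $P(k)$ adds $-a_{0}=-(L^{n})/n!$ to the coefficient of $k^{n}$; writing $K_{\overline{\mathcal{B}}}=K_{\overline{\mathcal{B}}/\mathbb{P}^{1}}-2F$ where $F$ is a fibre of $\overline{\mathcal{B}}\to\mathbb{P}^{1}$ (as $K_{\mathbb{P}^{1}}=-2[\mathrm{pt}]$) and using $(\overline{\mathcal{L}}(-E)^{n}.F)=(L^{n})$ (the general fibre being $(X,L)$, which $E$ does not meet), this correction exactly replaces $K_{\overline{\mathcal{B}}}$ by $K_{\overline{\mathcal{B}}/\mathbb{P}^{1}}$, so $b_{1}=-(\overline{\mathcal{L}}(-E)^{n}.K_{\overline{\mathcal{B}}/\mathbb{P}^{1}})/(2\,n!)$. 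Substituting $a_{0},a_{1},b_{0},b_{1}$ into $a_{1}b_{0}-a_{0}b_{1}$ and clearing denominators by $2(n!)((n+1)!)$ produces the first displayed equality of the theorem. The remaining two equalities are the purely formal decomposition $K_{\overline{\mathcal{B}}/\mathbb{P}^{1}}=\Pi^{*}(p_{1}^{*}K_{X})+K_{\overline{\mathcal{B}}/X\times\mathbb{P}^{1}}$, in which $K_{\overline{\mathcal{B}}/X\times\mathbb{P}^{1}}$ is supported on the exceptional locus of the blow-up $\Pi$ --- since $\overline{\mathcal{B}}$ and $X\times\mathbb{P}^{1}$ are normal with $\mathbb{Q}$-Cartier canonical class, a comparison in codimension one forces its non-exceptional part to vanish --- so all intersection numbers in the statement are legitimately taken on $\overline{\mathcal{B}}$.
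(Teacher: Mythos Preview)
The paper does not prove this theorem: it is quoted verbatim from \cite[Theorem 3.2]{Od09} and used as a black box, so there is no ``paper's own proof'' to compare against. Your argument is the standard one (and essentially the one in \cite{Od09}): extract $\DF=a_{1}b_{0}-a_{0}b_{1}$ from the normalized weight, identify $w(k)$ with $\chi(\overline{\mathcal{B}},\overline{\mathcal{L}}(-E)^{\otimes k})-\chi(X,L^{\otimes k})$ via the equivariant pushforward to $\mathbb{P}^{1}$, and read off $b_{0},b_{1}$ by asymptotic Riemann--Roch on $\overline{\mathcal{B}}$, with the $-P(k)$ correction exactly accounting for the passage from $K_{\overline{\mathcal{B}}}$ to $K_{\overline{\mathcal{B}}/\mathbb{P}^{1}}$. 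The algebra checks out, and you have correctly flagged the one genuinely delicate point, namely the sign and bookkeeping in the weight--Euler-characteristic identity; everything downstream is routine.
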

Now, we apply Theorem \ref{DF.formula} to Fano case which is our concern. 
Let $\overline{\mathcal{L}}=\Pi^*p_{1}^*(\mathcal{O}_{X}(-rK_X))$ where $r\in \mathbb{Z}_{>0}$ such that $\mathcal{L}(-E)$ on $\mathcal{B}$ is semi-ample. 
In particular, we have 
\begin{equation}\label{ses}
\frac{1}{r}\le \mathrm{Sesh}(\mathcal{J}, (X,\mathcal{O}_{X\times \mathbb{A}^{1}}(-K_{X\times \mathbb{A}^1}))).
\end{equation}

\noindent
On the other hand, Theorem \ref{DF.formula} implies
\begin{eqnarray}\label{10}
	\nonumber
	&&
		2(n!)((n+1)!)\DF(\mathcal{B}, \mathcal{L}(-E))
	\\
	\nonumber
	&&\quad
		=
		-\big((\overline{\mathcal{L}}-E)^n.\overline{\mathcal{L}}+nE\big)
		+
		(n+1)r
		\big((\overline{\mathcal{L}}-E)^n.K_{\mathcal{B}/X\times \mathbb{A}^1}\big)
	\\
	\label{eq:decomposition}
	&&\quad
		=
		-\big((\overline{\mathcal{L}}-E)^n.\overline{\mathcal{L}}\big)
		+
		\big((\overline{\mathcal{L}}-E)^n.
		((n+1)rK_{\mathcal{B}/X\times \mathbb{A}^1}-nE)\big).
\end{eqnarray}
We estimate the first and the second terms in (\ref{eq:decomposition}) separately. 
For the estimation of the second term, we use the bound for Seshadri constant  (\ref{ses}).

\subsection{Estimation of the first term}

Let us start from estimating the first term. 
Let us denote $\dim \mathrm{Supp}(\mathcal{O}_{X\times \mathbb{A}^1}/\mathcal{J})$ 
by $s$. In our estimation, we will use the following elementary decomposition of polynomial. 
\begin{Lem}\label{Lem:positivity}
There exist positive constants $\gamma_i$ and positive constants $\delta_{i,j}$ $($$0\le i \le n-1,\, 1\le j\le n-1$$)$ with $0<\delta_{i,j}<1$ such that the following equality of polynomials holds. 
\begin{eqnarray}
	\nonumber
	&&
		S^{n-1}+S^{n-2}(S-T)+\cdots+(S-T)^{n-1}	
	\\
	\label{eq:polynomial_rewritten}
	&&\quad
		=
		\sum_{i=0}^{n-1}
		\gamma_{i}
		(S-\delta_{i,1}T)\cdots
		(S-\delta_{i,n-1}T). 
\end{eqnarray}
\end{Lem}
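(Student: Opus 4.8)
The plan is to prove the identity by treating both sides as polynomials in $S$ over $\mathbb{Q}[T]$, or more symmetrically by the substitution that normalizes $T$. First I would rewrite the left-hand side: setting $U = S-T$, the sum $\sum_{j=0}^{n-1} S^{n-1-j}U^{j}$ is the standard telescoping expression $(S^{n}-U^{n})/(S-U) = (S^{n}-(S-T)^{n})/T$. So the claim is equivalent to exhibiting positive constants $\gamma_i$ and constants $\delta_{i,j}\in(0,1)$ with
\begin{equation}\label{eq:equiv_form}
	\frac{S^{n}-(S-T)^{n}}{T}
	=
	\sum_{i=0}^{n-1}\gamma_{i}\prod_{j=1}^{n-1}(S-\delta_{i,j}T).
\end{equation}
Because this is homogeneous of degree $n-1$ in $(S,T)$, it suffices to prove it after dividing by $T^{n-1}$ and setting $x = S/T$: we must write the polynomial $p(x) := (x^{n}-(x-1)^{n})$, which has degree $n-1$, as a positive combination $\sum_{i=0}^{n-1}\gamma_i \prod_{j=1}^{n-1}(x-\delta_{i,j})$ of monic degree-$(n-1)$ polynomials whose roots all lie in $(0,1)$.

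The key steps are then: (1) observe that $p(x)$ is monic of degree $n-1$ with positive leading coefficient (indeed $p(x) = nx^{n-1} - \binom{n}{2}x^{n-2} + \cdots$), so one choice is simply $\gamma_0 = n$ and a single product $\prod_{j=1}^{n-1}(x-\delta_{0,j})$ with $\delta_{0,j}$ the roots of $p(x)/n$; (2) verify that all roots of $p$ lie in the open interval $(0,1)$. For (2), note $p(x) = 0$ is equivalent to $x^{n} = (x-1)^{n}$, i.e. $\big(\tfrac{x-1}{x}\big)^{n} = 1$, i.e. $1 - \tfrac1x$ is an $n$-th root of unity; the root $x$ is real only when $1-\tfrac1x = \zeta$ for a real root of unity, but $\zeta = 1$ gives no solution and $\zeta = -1$ (when $n$ even) gives $x = \tfrac12$, while the genuinely complex roots of unity give complex $x$. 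So over $\mathbb{R}$ the polynomial $p$ has at most the single root $x=\tfrac12$; its other roots are complex, occurring in conjugate pairs $x = 1/(1-\zeta)$. A direct computation shows $\mathrm{Re}\,\tfrac{1}{1-\zeta} = \tfrac12$ for every $\zeta$ on the unit circle with $\zeta\ne 1$, so \emph{all} roots of $p$ have real part exactly $\tfrac12$, hence modulus bounded, and in particular, pairing each complex root $\rho$ with its conjugate, the quadratic factor $(x-\rho)(x-\bar\rho) = x^{2}-x+|\rho|^{2}$ has positive constant term and negative linear term. This last observation is exactly what lets us force the $\delta$'s into $(0,1)$: rather than using the (complex) roots directly, group $p(x) = n\prod(\text{linear and quadratic real factors})$ and re-expand each quadratic factor $x^{2}-x+c$ (with $c = |\rho|^{2}>0$, and one checks $c<\tfrac14$ is false in general, so instead) — here I would instead write each real quadratic factor $x^2 - x + c$ itself as a positive combination of products $(x-\delta)(x-\delta')$ with $\delta,\delta'\in(0,1)$, e.g. split it as $\alpha(x-\delta_1)(x-\delta_2) + \beta(x-\delta_3)(x-\delta_4)$ using that $x^2-x+c$ is a convex combination of two such "nice" quadratics for suitable parameters, and then distribute these convex combinations across the product to obtain the final sum of $n$ terms indexed by $i = 0,\dots,n-1$.

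The main obstacle I expect is bookkeeping in the last step: turning the single factorization $p(x)=n\prod_{\text{factors}}$ — in which individual quadratic factors $x^2-x+c$ may have $c > \tfrac14$, so they are \emph{not} products of two real linear factors with roots in $(0,1)$ — into an honest \emph{positive} combination of $n$ products of $n-1$ linear terms each with roots in $(0,1)$. The clean way around this is to not insist on factoring $p$ at all: instead prove \eqref{eq:equiv_form} by showing that the $n$-dimensional space of degree-$(n-1)$ polynomials is spanned, with positive coefficients available for $p$, by products $\prod_j(x-\delta_{i,j})$ as the $\delta$'s vary over $(0,1)$ — concretely, pick $n$ generic tuples $(\delta_{i,1},\dots,\delta_{i,n-1})\in(0,1)^{n-1}$ so that the corresponding products form a basis, solve the resulting linear system for $(\gamma_0,\dots,\gamma_{n-1})$, and verify positivity. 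Positivity of the solution can be arranged because $p$ has positive leading coefficient and, by the real-part computation above, its sign pattern and root location are controlled; alternatively one argues by a compactness/perturbation argument starting from the trivial case $n=1$ (where the identity is $1 = \gamma_0$) and $n=2$ (where $2x-1 = \gamma_0(x-\delta_{0,1})$ with $\gamma_0 = 2$, $\delta_{0,1}=\tfrac12$) and induction, multiplying a known decomposition for degree $n-2$ by a fixed nice quadratic factor and adjusting. I would present the induction version, as it avoids any explicit root analysis: the only real content needed is that a degree-$(n-1)$ polynomial with positive leading coefficient which is nonnegative at $x=0$ with the right sign pattern admits such a decomposition, which follows by writing it as (nice quadratic)$\times$(degree $n-3$ piece handled inductively) $+$ (lower-order correction absorbed into the $i$-indexed terms).
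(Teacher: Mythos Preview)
Your proposal cycles through several strategies without settling on one that actually secures the only nontrivial point, namely the positivity of the $\gamma_i$. The reduction to the single-variable polynomial $p(x)=x^n-(x-1)^n$ and the computation that all its roots have real part $\tfrac12$ are correct but do not help: as you yourself note, most roots are genuinely complex, so you cannot take the $\delta_{i,j}$ to be the roots of $p$, and your fallback of splitting each irreducible quadratic factor $x^2-x+c$ (here $c=|\rho|^2>\tfrac14$ for every nonreal root $\rho$) as a positive combination of ``nice'' quadratics is left as an unverified assertion. The induction sketch at the end is too vague to evaluate, and the claim that for a generic choice of tuples in $(0,1)^{n-1}$ the resulting $\gamma_i$ will be positive ``because $p$ has positive leading coefficient and its sign pattern is controlled'' is not an argument --- for most choices of tuples the coefficients will have mixed signs.

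Your third idea --- choose $n$ tuples so the products form a basis, solve the linear system, check positivity --- is exactly the paper's approach, but you are missing the one observation that makes positivity automatic. The paper notes that at the \emph{degenerate} choice
\[
\delta_{i,j}=\begin{cases}1 & \text{if } j\le i,\\ 0 & \text{if } j>i,\end{cases}
\]
the product $\prod_{j}(S-\delta_{i,j}T)$ equals $S^{n-1-i}(S-T)^i$, so the desired identity holds tautologically with every $\gamma_i=1$. These $\delta$'s lie on the boundary of the open cube, not inside it; but the products still form a basis there, so the coefficients $\gamma_i$ depend continuously (indeed rationally, by Cramer's rule) on the $\delta$'s in a neighborhood. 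Perturbing each $\delta_{i,j}$ slightly into $(0,1)$ keeps every $\gamma_i$ close to $1$, hence positive. That one-line perturbation replaces all of your root analysis and the proposed induction.
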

\begin{proof}[proof of Lemma \ref{Lem:positivity}]
Let us put
$$
	f(S,T):=S^{n-1}+S^{n-2}(S-T)+\cdots+(S-T)^{n-1}.
$$
It is an elementary fact that, for generic $\{\delta_{i,j}\in \mathbb{R}_{>0}\}_{0\le i \le n-1,\, 1\le j\le n-1}$, 
$$
	\big\{
		g(S,T,\{\delta_{i,j}\}_j)
	\big\}_{0\le i \le n-1}
	:=
	\bigg\{
		\prod_{j=1}^{n-1} (S-\delta_{i,j}T)
	\bigg\}_{0\le i \le n-1}
$$ 
constitutes a basis of the vector space of homogeneous polynomials in $S,T$ of degree $n-1$.
Hence, for generic $\{\delta_{i,j}\}_{i,j}$, $f$ can be written as a linear combination of $g(S,T,\{\delta_{i,j}\}_j)$, i.e., there exist constants $\gamma_i$ such that
$$
	f(S,T)=\sum_i \gamma_i g(S,T,\{\delta_{i,j}\}_j).
$$
In particular, $\gamma_i=1$ for all $i$ when 
\begin{equation}\label{eq:coefficients}
	\delta_{i,j}
	=
	\left\{\begin{array}{cc}
	0 & \mbox{if } i+1\le j
	\\
	1 & \mbox{otherwise}.
	\end{array}\right.
\end{equation}
Perturbing $\{\delta_{i,j}\}_{i,j}$ in (\ref{eq:coefficients}), we get $\gamma_i$ and $\{\delta_{i,j}\}_{i,j}$ satisfying (\ref{eq:polynomial_rewritten}). 
Here, we complete the proof of Lemma \ref{Lem:positivity}. 
\end{proof}

\noindent
Think of the equality (\ref{eq:polynomial_rewritten}) substituted $S$ by $\mathcal{L}$ and $T$ by $E$.
Note that $\overline{\mathcal{L}}^{n+1}=0$. 
Hence, if $s>0$ and $\mathcal{J}$ is not of the form $(t^N)$, Lemma \ref{Lem:positivity} implies 
\begin{eqnarray*}
		-\big((\overline{\mathcal{L}}-E)^n.\overline{\mathcal{L}}\big)
	&=&
		\overline{\mathcal{L}}^{n+1}
		-(\overline{\mathcal{L}}-E)^{n+1}
		-E.(\overline{\mathcal{L}}-E)^n
	\\
	&=&
		E.\overline{\mathcal{L}}.
		\{
			\overline{\mathcal{L}}^{n-1}+
			\overline{\mathcal{L}}^{n-2}.(\overline{\mathcal{L}}-E)
			+\cdots+
			(\overline{\mathcal{L}}-E)^{n-1}	
		\}
	\\
	&=&
		E.\overline{\mathcal{L}}.
		\bigg(\sum_{i=0}^{n-1}
		\gamma_{i}
		(\overline{\mathcal{L}}-\delta_{i,1}E).\cdots.
		(\overline{\mathcal{L}}-\delta_{i,n-1}E)\bigg)
	\\
	&>&
		0.
\end{eqnarray*}
The last inequality follows from  that $E.\overline{\mathcal{L}}$ is a non-zero effective cycle.
If $s=0$, then it easily follows that
$$
	-\big((\overline{\mathcal{L}}-E)^n.\overline{\mathcal{L}}\big)=0.
$$
Summing up, we proved the following on the first term of (\ref{eq:decomposition}). 
\begin{Prop}\label{prop:positivity_first_term}
	$-\big((\overline{\mathcal{L}}-E)^n.\overline{\mathcal{L}}\big) \ge 0$
	for any flag ideal $\mathcal{J}$ {\color{black}{which is not of the form $(t^N)$}}. 
	The equality holds if and only if $\dim \mathrm{Supp}(\mathcal{O}_{X\times \mathbb{A}^1}/\mathcal{J})=0$.
\end{Prop}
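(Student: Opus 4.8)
The plan is to prove Proposition~\ref{prop:positivity_first_term}, namely that $-\big((\overline{\mathcal{L}}-E)^n.\overline{\mathcal{L}}\big) \ge 0$ for any flag ideal $\mathcal{J}$ not of the form $(t^N)$, with equality precisely when $s := \dim \mathrm{Supp}(\mathcal{O}_{X\times \mathbb{A}^1}/\mathcal{J}) = 0$. The key algebraic identity is the telescoping
\begin{eqnarray*}
	-\big((\overline{\mathcal{L}}-E)^n.\overline{\mathcal{L}}\big)
	&=&
		\overline{\mathcal{L}}^{n+1}-(\overline{\mathcal{L}}-E)^{n+1}-E.(\overline{\mathcal{L}}-E)^n
	\\
	&=&
		E.\overline{\mathcal{L}}.\big(\overline{\mathcal{L}}^{n-1}+\overline{\mathcal{L}}^{n-2}.(\overline{\mathcal{L}}-E)+\cdots+(\overline{\mathcal{L}}-E)^{n-1}\big),
\end{eqnarray*}
where I use $\overline{\mathcal{L}}^{n+1}=0$ (since $\overline{\mathcal{L}}$ is pulled back from $X\times\mathbb{P}^1$, whose fibers over $\mathbb{P}^1$ have dimension $n$) and the binomial identity $\overline{\mathcal{L}}^{n+1}-(\overline{\mathcal{L}}-E)^{n+1} = E.\sum_{i=0}^n \overline{\mathcal{L}}^{n-i}(\overline{\mathcal{L}}-E)^i$, subtracting the $i=0$ term. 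After this I substitute the decomposition of Lemma~\ref{Lem:positivity} with $S=\overline{\mathcal{L}}$, $T=E$, to rewrite the bracketed sum as $\sum_{i=0}^{n-1}\gamma_i (\overline{\mathcal{L}}-\delta_{i,1}E)\cdots(\overline{\mathcal{L}}-\delta_{i,n-1}E)$ with $\gamma_i>0$ and $0<\delta_{i,j}<1$.

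First I would dispose of the case $s=0$: when $\mathcal{O}_{X\times\mathbb{A}^1}/\mathcal{J}$ is supported on a finite set of points, the exceptional divisor $E$ of the blow-up has the property that $E.\overline{\mathcal{L}}$, a cycle supported on the exceptional locus over finitely many points, meets $\overline{\mathcal{L}}^{n-1}$ trivially — more precisely $E$ is contracted by $\Pi$ to a finite set, while $\overline{\mathcal{L}}=\Pi^*p_1^*(\mathcal{O}_X(-rK_X))$ needs $n$ factors that ``see'' the base, so $\overline{\mathcal{L}}^n.E=0$ and likewise every term $\overline{\mathcal{L}}^{n-i}.(\overline{\mathcal{L}}-E)^i.E$ vanishes by the projection formula; hence the whole expression is $0$. (Alternatively, one observes $(\overline{\mathcal{L}}-E)^n.\overline{\mathcal{L}}=\overline{\mathcal{L}}^{n+1}=0$ directly since lower powers of $E$ also contribute zero against $\overline{\mathcal{L}}$.)

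For the case $s>0$ (equivalently $\mathcal{J}$ not cosupported in dimension $0$; note $\mathcal{J}\ne(t^N)$ rules out the empty support), I would argue term by term that each $E.\overline{\mathcal{L}}.(\overline{\mathcal{L}}-\delta_{i,1}E)\cdots(\overline{\mathcal{L}}-\delta_{i,n-1}E) \ge 0$, with at least one strictly positive. The point is that $E.\overline{\mathcal{L}}$ is a non-zero effective cycle of dimension $n-1$ on $\overline{\mathcal{B}}$ (non-zero precisely because $s>0$, so $E$ dominates a positive-dimensional subvariety of $X\times\mathbb{P}^1$ and meets the relatively ample-ish pullback $\overline{\mathcal{L}}$ non-trivially on components of dimension $\ge 1$), and for each $\delta\in(0,1)$ the $\mathbb{Q}$-divisor $\overline{\mathcal{L}}-\delta E$ is nef (or at least non-negative on the relevant effective cycles): indeed $\mathcal{L}(-E)=\overline{\mathcal{L}}-E$ restricted to $\mathcal{B}$ is semi-ample by hypothesis, $\overline{\mathcal{L}}$ itself is nef, and $\overline{\mathcal{L}}-\delta E = \delta(\overline{\mathcal{L}}-E)+(1-\delta)\overline{\mathcal{L}}$ is a convex combination of nef classes hence nef. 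Intersecting the effective cycle $E.\overline{\mathcal{L}}$ against a product of nef classes gives a non-negative number, and I would pin down strict positivity on the term coming from $i=0$ (where all $\delta_{0,j}$ are small perturbations of $0$, so $\overline{\mathcal{L}}-\delta_{0,j}E$ is close to $\overline{\mathcal{L}}$) by a direct computation: $E.\overline{\mathcal{L}}.\overline{\mathcal{L}}^{n-1}$ up to perturbation equals the degree of $p_1^*(\mathcal{O}_X(-rK_X))^{n-1}$ against the $(n-1)$-dimensional part of the image cycle of $E$, which is positive by ampleness of $-K_X$ since $s>0$ forces that image to have a component of dimension $n-1$ meeting generic members.

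The main obstacle I expect is the last strict-positivity claim: one must ensure that for $s>0$ the cycle $E.\overline{\mathcal{L}}$ has a component on which $\overline{\mathcal{L}}^{n-1}$ (or the perturbed product) is strictly positive, which requires tracking that the support of $\mathcal{O}_{X\times\mathbb{A}^1}/\mathcal{J}$ genuinely produces an $(n-1)$-dimensional piece of $E$ dominating a positive-dimensional subvariety of $X$; the perturbation argument via Lemma~\ref{Lem:positivity} is designed exactly to convert this into a statement about honest products of nef classes so that each summand is manifestly $\ge 0$ and one clean summand is $>0$. Handling the nefness of $\overline{\mathcal{L}}-\delta E$ on all of $\overline{\mathcal{B}}$ (not just $\mathcal{B}$) requires a small check at the central fiber over $\infty\in\mathbb{P}^1$, but there the blow-up is trivial so $\overline{\mathcal{L}}=\overline{\mathcal{L}}-E$ restricts as the pullback of an ample class and there is nothing to prove.
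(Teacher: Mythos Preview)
Your approach — the telescoping identity, the Lemma~\ref{Lem:positivity} decomposition, and the nefness-versus-effective argument — is exactly the paper's. The $s=0$ case and the non-negativity for $s>0$ are fine (indeed, non-negativity already follows term-by-term from the undecomposed sum $\sum_k E.\overline{\mathcal{L}}^{n-k}.(\overline{\mathcal{L}}-E)^k$, since $E$ is effective and both $\overline{\mathcal{L}}$ and $\overline{\mathcal{L}}-E$ are nef).

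The gap is in your strict-positivity step. You try to isolate the $i=0$ term, where the $\delta_{0,j}$ are small perturbations of $0$, so that the term is close to $E.\overline{\mathcal{L}}^{n}$. But by the projection formula
\[
E.\overline{\mathcal{L}}^{n} \;=\; (\Pi_{*}E).(p_{1}^{*}L^{r})^{n},
\]
and $\Pi_{*}E$ is an $n$-cycle supported on $\mathrm{Supp}(\mathcal{O}_{X\times\mathbb{A}^{1}}/\mathcal{J})$, which has dimension $s$. Hence $\Pi_{*}E=0$ as an $n$-cycle whenever $s<n$, and $E.\overline{\mathcal{L}}^{n}=0$. Your claim that ``$s>0$ forces that image to have a component of dimension $n-1$'' is simply false: $s>0$ only says the support has dimension at least $1$. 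So the $i=0$ term gives nothing when $0<s<n$.

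To actually pin down strict positivity, look instead at the $k=(n-s)$-th term of the original telescoped sum (which is in range since $1\le s\le n$):
\[
E.\overline{\mathcal{L}}^{s}.(\overline{\mathcal{L}}-E)^{n-s}.
\]
Here $E.\overline{\mathcal{L}}^{s}$ is a non-zero effective $(n-s)$-cycle whose image under $\Pi$ is $0$-dimensional (cut the $s$-dimensional support by $s$ general pull-back hyperplanes); on its support $\overline{\mathcal{L}}$ is numerically trivial, so $\overline{\mathcal{L}}-E$ restricts to $-E$, which is $\Pi$-ample. Hence this term is strictly positive. The paper itself is terse at exactly this point — it just asserts that the inequality ``follows from that $E.\overline{\mathcal{L}}$ is a non-zero effective cycle'' — so you were right to flag strict positivity as the main obstacle; but your proposed fix via the $i=0$ term does not work.
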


\subsection{Estimation of the Second term via Seshadri constants}

To get the positivity of the second term of (\ref{eq:decomposition}), we will show that it suffices to have the upper bounds of Seshadri constant of $\mathcal{J}$. 
Indeed, we have
\begin{Prop}\label{prop:second}
If there exists a positive constant $\varepsilon$ such that
\begin{equation}\label{eq:div_estimate}
	\bigg(\frac{n+1}{n}\bigg)K_{\mathcal{B}/X\times \mathbb{A}^1}
	-\mathrm{Sesh}(\mathcal{J}, (X\times \mathbb{A}^1, 
\mathcal{O}_{X\times \mathbb{A}^{1}}(-K_{X\times \mathbb{A}^1})))E
	>\varepsilon E,
\end{equation}
then the second term of (\ref{eq:decomposition}) is positive so that  $\DF(\mathcal{B},\mathcal{L}(-E))>0$.
If the left hand of (\ref{eq:div_estimate})
is effective $($possibly zero$)$, then the second term of (\ref{eq:decomposition}) is non-negative so that $\DF(\mathcal{B},\mathcal{L}(-E))\ge0$. 
\end{Prop}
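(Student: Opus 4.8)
The second term of (\ref{eq:decomposition}) is $\big((\overline{\mathcal{L}}-E)^{n}.((n+1)rK_{\mathcal{B}/X\times\mathbb{A}^{1}}-nE)\big)$, and since $\overline{\mathcal{L}}-E$ restricts on $\mathcal{B}$ to the semi-ample class $\mathcal{L}(-E)$, it is nef along the central fibre $\mathcal{B}_{0}:=\Pi^{-1}(X\times\{0\})$, which is proper over $\{0\}$. The plan is: first use the hypothesis together with the bound (\ref{ses}) to see that $(n+1)rK_{\mathcal{B}/X\times\mathbb{A}^{1}}-nE$ is an effective $\mathbb{Q}$-divisor supported on $\mathcal{B}_{0}$; then pair it against the $n$-th power of the nef class $\overline{\mathcal{L}}-E$ to obtain non-negativity; and finally, for the strict statement, upgrade this to a strict inequality by showing that $\overline{\mathcal{L}}-E$ restricts to a \emph{big} class on some component of $E$.

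For the reduction, write $(n+1)rK_{\mathcal{B}/X\times\mathbb{A}^{1}}-nE=nr\big(\tfrac{n+1}{n}K_{\mathcal{B}/X\times\mathbb{A}^{1}}-\tfrac{1}{r}E\big)$. Semi-ampleness of $\overline{\mathcal{L}}-E$ gives $\tfrac{1}{r}\le\mathrm{Sesh}(\mathcal{J},(X\times\mathbb{A}^{1},\mathcal{O}_{X\times\mathbb{A}^{1}}(-K_{X\times\mathbb{A}^{1}})))$ by (\ref{ses}), so, $E$ being effective,
\[
\tfrac{n+1}{n}K_{\mathcal{B}/X\times\mathbb{A}^{1}}-\tfrac{1}{r}E\ \ge\ \tfrac{n+1}{n}K_{\mathcal{B}/X\times\mathbb{A}^{1}}-\mathrm{Sesh}\big(\mathcal{J},(X\times\mathbb{A}^{1},\mathcal{O}_{X\times\mathbb{A}^{1}}(-K_{X\times\mathbb{A}^{1}}))\big)E,
\]
and by (\ref{eq:div_estimate}) the right-hand side is $>\varepsilon E$ (resp.\ is effective). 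Hence $D:=\tfrac{n+1}{n}K_{\mathcal{B}/X\times\mathbb{A}^{1}}-\tfrac{1}{r}E$ is an effective $\mathbb{Q}$-divisor with $D-\varepsilon E$ effective (resp.\ with $D$ effective), and since $K_{\mathcal{B}/X\times\mathbb{A}^{1}}$ and $E$ are both supported on $\mathcal{B}_{0}$, so is $D$. Each prime component of $D$ is then a proper $n$-dimensional variety on which $\overline{\mathcal{L}}-E$ restricts to a nef class, so $(\overline{\mathcal{L}}-E)^{n}.D\ge 0$ and the second term equals $nr\,(\overline{\mathcal{L}}-E)^{n}.D\ge 0$. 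Together with Proposition \ref{prop:positivity_first_term} (non-negativity of the first term) this already gives $\DF(\mathcal{B},\mathcal{L}(-E))\ge 0$, which is the ``resp.'' statement.

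For the strict inequality, note first that (\ref{eq:div_estimate}) with $\varepsilon>0$ forces $E$ to be nonzero and $\Pi$-exceptional: a prime divisor that is not $\Pi$-exceptional has coefficient $0$ in $K_{\mathcal{B}/X\times\mathbb{A}^{1}}$, so effectivity of the left-hand side of (\ref{eq:div_estimate}) minus $\varepsilon E$ prevents it from occurring in $E$, while $E=0$ would force $\mathcal{J}=\mathcal{O}_{X\times\mathbb{A}^{1}}$. Since $D\ge\varepsilon E$, it suffices to prove $(\overline{\mathcal{L}}-E)^{n}.E>0$, i.e.\ that $\overline{\mathcal{L}}-E$ has positive top self-intersection on some prime component $E_{i}$ of $E$. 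Put $f:=\Pi|_{E_{i}}\colon E_{i}\to W:=\Pi(E_{i})\subseteq X\times\{0\}$ and $e:=\dim W$. Then $\overline{\mathcal{L}}|_{E_{i}}=f^{*}A$ with $A:=\mathcal{O}_{X\times\mathbb{A}^{1}}(-rK_{X\times\mathbb{A}^{1}})|_{W}$ ample on $W$ (because $-K_{X}$ is ample), while $(-E)|_{E_{i}}$ is $f$-ample (because $-E$ is $\Pi$-relatively ample); so
\[
\binom{n}{e}(f^{*}A)^{e}.((-E)|_{E_{i}})^{n-e}=\binom{n}{e}(\deg_{A}W)\big((-E)|_{\text{general fibre of }f}\big)^{n-e}>0,
\]
and this mixed term is the one to be exhibited inside $(\overline{\mathcal{L}}-E|_{E_{i}})^{n}$. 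When $e=0$ — exactly the case $\dim\mathrm{Supp}(\mathcal{O}_{X\times\mathbb{A}^{1}}/\mathcal{J})=0$ — this is immediate, since there $\overline{\mathcal{L}}|_{E_{i}}=0$ and $(-E)|_{E_{i}}$ is genuinely ample; and when $\dim\mathrm{Supp}(\mathcal{O}_{X\times\mathbb{A}^{1}}/\mathcal{J})>0$, Proposition \ref{prop:positivity_first_term} makes the \emph{first} term of (\ref{eq:decomposition}) strictly positive, so, with the non-negativity of the second term, $\DF(\mathcal{B},\mathcal{L}(-E))>0$ in any case.

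The crux is the positive-dimensional part of this last step: although $\overline{\mathcal{L}}-E$ is nef but never big on all of $\overline{\mathcal{B}}$ (indeed $\overline{\mathcal{L}}^{n+1}=0$, as $\overline{\mathcal{L}}$ is pulled back from the $n$-dimensional $X$), it should be big \emph{along} the $\Pi$-exceptional divisor $E$ precisely because $-E$ is $\Pi$-relatively ample there and $-K_{X}$ is ample on the base; making this precise — identifying a component $E_{i}$ and controlling the binomial expansion of $(\overline{\mathcal{L}}-E|_{E_{i}})^{n}$ around the positive mixed term above — is where the real work lies. Everything else reduces to the hypothesis (\ref{eq:div_estimate}), the Seshadri bound (\ref{ses}), and the nef-against-effective positivity already used for the first term in Proposition \ref{prop:positivity_first_term}.
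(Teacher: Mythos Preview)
Your reduction is exactly the paper's: via (\ref{ses}) and the hypothesis you correctly pass to an effective $D\ge\varepsilon E$ supported on the central fibre, and then the second term equals $nr\big((\overline{\mathcal{L}}-E)^n.D\big)$, so non-negativity follows from nefness of $\overline{\mathcal{L}}-E$. For the strict inequality, both you and the paper reduce to the key fact $\big((\overline{\mathcal{L}}-E)^n.E\big)>0$; this is Lemma~\ref{eq:positivity_intersection} in the paper.

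Where the two arguments diverge is in how this key fact is handled. The paper proves $\big((\overline{\mathcal{L}}-E)^n.E\big)>0$ in full generality: it rewrites $(n+1)\big((\overline{\mathcal{L}}-E)^n.E\big)=\big((\overline{\mathcal{L}}-E)^n.(\overline{\mathcal{L}}+nE)\big)-(\overline{\mathcal{L}}-E)^{n+1}$, expands via the polynomial identity of Lemma~\ref{Lem:positivity2}, and then uses the Hodge index theorem and relative ampleness of $-E$ (Lemma~\ref{Lem:intersection_number}) to control each piece. This gives the proposition exactly as stated: the second term itself is strictly positive.

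Your route is different. You establish $\big((\overline{\mathcal{L}}-E)^n.E\big)>0$ only in the case $s=\dim\mathrm{Supp}(\mathcal{O}_{X\times\mathbb{A}^1}/\mathcal{J})=0$, by the direct observation that then every exceptional component $E_i$ maps to a point, $\overline{\mathcal{L}}|_{E_i}$ is trivial, and $(-E)|_{E_i}$ is genuinely ample; this is correct and pleasantly short. For $s>0$ you do \emph{not} prove positivity of the second term but instead invoke Proposition~\ref{prop:positivity_first_term} to make the \emph{first} term strictly positive and combine it with non-negativity of the second. This yields $\DF(\mathcal{B},\mathcal{L}(-E))>0$, which is all that is used downstream in Theorem~\ref{Thm:main}, and it bypasses Lemmas~\ref{Lem:positivity2} and~\ref{Lem:intersection_number} entirely. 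What you give up is the literal content of the proposition: the claim ``the second term of (\ref{eq:decomposition}) is positive'' is left unproved when $s>0$. Your closing paragraph correctly identifies the missing piece (bigness of $\overline{\mathcal{L}}-E$ along some component of $E$), but note that the paper's proof does not proceed component-by-component or by the binomial expansion you sketch; the cross-terms in $(f^*A-E|_{E_i})^n$ need not have a sign, and the paper's global argument via the Hodge index theorem is what actually closes this gap.
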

\begin{proof}[proof of Proposition \ref{prop:second}] 
We have already seen that the first term of (\ref{eq:decomposition}) is non-negative. 
For the estimation of the second term, the following positivity is crucial.
\begin{Lem}[{\cite[Equation (3)]{OdII}}]\label{eq:positivity_intersection}
	$((\overline{\mathcal{L}}-E)^n.E)>0. $
\end{Lem}
Once we get Lemma \ref{eq:positivity_intersection}, (\ref{eq:div_estimate}) and Lemma \ref{eq:positivity_intersection} immediately imply that the second term of (\ref{eq:decomposition}) is strictly positive.
The rest of this subsection will be devoted to the proof of Lemma \ref{eq:positivity_intersection}.
We prepare the following two results.
\begin{Lem}[{\cite[Lemma 2.7]{OdII}}]\label{Lem:positivity2}
Assume $n\ge 2$. 
Then the following hold. 
$(\mathrm{i})$
We have the following equality of polynomials;
$$
	(T-1)^n(T+n)=T^{n+1}-\sum_{i=1}^
	{n}
	(n+1-i)(T-1)^{n-i}T^{i-1}.
$$
$(\mathrm{ii})$
The polynomials $(T-1)^{n-i}T^{i-1}$ for $1\le i \le n$ are linearly independent over $\mathbb{Q}$.
In particular, for the monomial $T^s$ for an arbitrary integer $s$ with $0<s \le n$, there exist intergers $m_i$ $(1\le i \le n)$ such that
$$
	T^s=\sum_{i=1}^{n}m_i(T-1)^{n-i}T^{i-1}.
$$
\end{Lem}
This is an elementary lemma on polynomials as Lemma \ref{Lem:positivity}, so we leave the proof to the reader.
\begin{Lem}[{\cite[{Lemma 2.8}]{OdII}}]\label{Lem:intersection_number}
$(\mathrm{i})$
For any $1\le i \le n-1$,
\begin{equation}\label{eq:intersection1}
	(-E^2.(\overline{\mathcal{L}}-E)^{n-1}.(\overline{\mathcal{L}})^{i-1})\ge0.
\end{equation}
$(\mathrm{ii})$
Let $s=\dim(\mathrm{Supp}(\mathcal{O}_{X\times \mathbb{A}^1}/\mathcal{J}))$.
\begin{equation}\label{eq:intersection2}
	((-E)^{n+1-s}.(\overline{\mathcal{L}})^{s})<0.
\end{equation}
\end{Lem}
\begin{proof}[proof of Lemma \ref{Lem:intersection_number}]
By cutting $X\times \mathbb{P}^1$ by the divisors corresponding to $\overline{\mathcal{L}}^{\otimes r}$ and $(\overline{\mathcal{L}}-E)^{\otimes r}$, the proof of (\ref{eq:intersection1}) (resp. (\ref{eq:intersection2})) can be reduced to the case where $\dim(X)=2$ (resp. $\dim(X)=n+1-s$).
Then, (\ref{eq:intersection1}) (resp. (\ref{eq:intersection2})) follows from the Hodge index theorem (resp. the relative ampleness of $(-E)$).
The proof of Lemma \ref{Lem:intersection_number} is completed.
\end{proof}
\begin{proof}[proof of Lemma \ref{eq:positivity_intersection}] 
We decompose the left hand of Lemma \ref{eq:positivity_intersection}
as follows;
\begin{equation}\label{eq:decomposition1}
	(n+1)((\overline{\mathcal{L}}-E)^n.E)
	=
	((\overline{\mathcal{L}}-E)^n.(\overline{\mathcal{L}}+nE))
	-(\overline{\mathcal{L}}-E)^{n+1}.
\end{equation}
The second term in (\ref{eq:decomposition1}) is non-positive, in fact
\begin{eqnarray}
	\nonumber
		(\overline{\mathcal{L}}-E)^{n+1}
	&=&
		(\overline{\mathcal{L}}-E)^{n+1}-(\overline{\mathcal{L}})^{n+1}
	\\
	\label{eq:decomposition2}
	&=&
		-(E.\sum_{i=0}^{n}(\overline{\mathcal{L}}-E)^i.(\overline{\mathcal{L}})^{n-i})
		\le 0.
\end{eqnarray}
Let us apply Lemma \ref{Lem:positivity2} to the first term in (\ref{eq:decomposition1}).
Denote  $\dim(\mathrm{Supp}(\mathcal{O}_{X\times \mathbb{A}^1}/\mathcal{J}))$ by $s$. 
Let consider the case where $n\ge2$. 
From Lemma \ref{Lem:positivity2}, we find that for a sufficiently small $\varepsilon'>0$ there exist (small) real constants $\varepsilon_i$ $(1\le i \le n)$ such that
\begin{eqnarray}
		\nonumber
			((\overline{\mathcal{L}}-E)^n.(\overline{\mathcal{L}}+nE))
		&=&
			(\overline{\mathcal{L}})^{n+1}
			+
			(-E^2.
			\sum_{i=1}^{n}(n+1-i)
			((\overline{\mathcal{L}}-E)^{n-i}.(\overline{\mathcal{L}})^{i-1}))
		\\
		\nonumber
		&=&
			(-E^2.
			\sum_{i=1}^{n}(n+1-i)(\overline{\mathcal{L}}-E)^{n-i}.
			(\overline{\mathcal{L}})^{i-1})	
		\\
		\nonumber
		&=&
			\sum_{i=1}^{n}(n+1-i+\varepsilon_i)(-E^2.(\overline{\mathcal{L}}-E)^{n-i}.
			(\overline{\mathcal{L}})^{i-1})
		\\
		\label{eq:decomposition3}
		&&\qquad
			-\varepsilon'((-E)^{n+1-s}.(\overline{\mathcal{L}})^{s})
\end{eqnarray}
and $n+1-i+\varepsilon_i>0$ for all $1\le i \le n-1$.
From Lemma \ref{Lem:intersection_number} and (\ref{eq:decomposition3}), we find that   the first term in (\ref{eq:decomposition1}) is strictly positive.
This holds for the case where $n=1$ too, because
$$
	((\overline{\mathcal{L}}-E).(\overline{\mathcal{L}}+E))
	=-(E.E)>0.
$$
The proof of Lemma \ref{eq:positivity_intersection} is completed.
\end{proof}
\end{proof}

\section{Proofs}\label{proof}
Now, we complete the proof of theorems and corollaries.
\begin{proof}[proof of Theorem \ref{Thm:main}]
From Proposition \ref{prop:first}, we get
\begin{equation}\label{sesh.bound}
	\mathrm{Sesh}(\mathcal{J}, (X\times \mathbb{A}^1, 
\mathcal{O}_{X\times \mathbb{A}^1}(-K_{X\times \mathbb{A}^1})))
	<
	\biggl(
		\frac{n+1}{n}
	\biggr)
		\min_i 
	\bigg\{
		 \frac{a_i-b_i+1}{c_i}
	\bigg\}
\end{equation}
for any flag ideal $\mathcal{J}$.
Then,
\begin{eqnarray*}
	&&
		\bigg(
			\frac{n+1}{n}
		\bigg)
			K_{\mathcal{B}/X\times \mathbb{A}^1}
		-\mathrm{Sesh}
		(\mathcal{J}, (X\times \mathbb{A}^1, 
\mathcal{O}_{X\times \mathbb{A}^{1}}(-K_{X\times \mathbb{A}^1})))E
	\\
	&&
		\qquad
		>
		\bigg(
			\frac{n+1}{n}
		\bigg)
			\sum a_i E_i
		-
		\bigg(
			\frac{n+1}{n}
		\bigg)
			\min_i 
		\bigg\{
			\frac{a_i-b_i+1}{c_i}
		\bigg\}
			\sum c_iE_i
	\\
	&&
		\qquad
		=
		\bigg(
			\frac{n+1}{n}
		\bigg)
		\sum \bigg\{
			\bigg(
				\frac{a_i-b_i+1}{c_i}-\min_i\big\{\frac{a_i-b_i+1}{c_i}
			\bigg\}\bigg)
			+\frac{b_i-1}{c_i}
		\bigg\}
		c_iE_i
	\\
	&&
		\qquad
		\ge
		0.
\end{eqnarray*}
The proof is completed due to Proposition \ref{prop:second}.
\end{proof}

We comment on the case $\lct_{G}(X)=\frac{n}{n+1}$. 
From the proof of Proposition \ref{prop:positivity_first_term}, we find that if $X$ is not K-stable under the assumption, then $\dim\Supp(\mathcal{O}_{X\times \mathbb{A}^1}/\mathcal{J})$ should be zero. 
Such situation seems to be quite rare as partially proved in \cite[Theorem 4.29]{RT07} and \cite[proof of Theorem 4.1]{PR07}. 
Let us assume that $X$ is smooth. 
Let us recall that the minimal discrepancy of a smooth closed point in $X\times \mathbb{A}^{1}$ is $n$ (cf.\ e.\ g.\ \cite{Amb06}). On the other hand, 
$\Sesh(\mathcal{J},(X\times \mathbb{A}^{1}, 
\mathcal{O}_{X\times \mathbb{A}^{1}}(-K_{X\times \mathbb{A}^{1}})))$ is at most $n$ 
if $X$ is not isomorphic to $\mathbb{P}^{n}$, because 
$$
	n\ge\Sesh(\mathfrak{m}_{x,X},(X,\mathcal{O}_{X}(-K_X)))
$$ 
by \cite[Theorem 1.7]{BS09} and 
\begin{eqnarray}
	\nonumber
		\Sesh(\mathfrak{m}_{x,X},(X,\mathcal{O}_{X}(-K_X)))
	&\geq&
		 \Sesh(I_0,(X,\mathcal{O}_{X}(-K_X)))
	\\
	\nonumber
	&\geq& 
		\mathrm{Sesh}(\mathcal{J}, (X\times \mathbb{A}^1, 
\mathcal{O}_{X\times \mathbb{A}^{1}}(-K_{X\times \mathbb{A}^1})))
\end{eqnarray}
by the condition that $s=0$ (cf.\ e.\ g.\ \cite[Lemma 4.7]{PR07}).
Here, $\mathfrak{m}_{x,X}$ is the maximal ideal of $\mathcal{O}_{X,x}$.
Let us recall that we proved 
\begin{equation}\label{lct.girigiri} 
	\bigg(\frac{n+1}{n}\bigg)K_{\mathcal{B}/X\times \mathbb{A}^1}
	-\mathrm{Sesh}(\mathcal{J}, (X\times \mathbb{A}^1, 
\mathcal{O}_{X\times \mathbb{A}^{1}}(-K_{X\times \mathbb{A}^1})))E 
	\geq 0.  
\end{equation}
From the above three remarks, 
it is likely that we could strengthen the inequality (\ref{lct.girigiri}) 
so that the corresponding Donaldson-Futaki invariants are positive. 
Hence, we expect 
\begin{Conj}
Let $X$ be a smooth Fano manifold of dimension $n$. 
If $\lct(X)=\frac{n}{n+1}$ $($resp.\ $\lct_{G}(X)=\frac{n}{n+1}$$)$, then $(X,\mathcal{O}_{X}(-K_{X}))$ is K-stable $($resp.\ $G$-equivariantly K-stable$)$ 
or $X$ is isomorphic to $\mathbb{P}^{n}$. 
\end{Conj}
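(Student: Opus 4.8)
The plan is to revisit the proof of Theorem \ref{Thm:main} in the critical case $\lct(X)=\frac{n}{n+1}$ (resp.\ $\lct_G(X)=\frac{n}{n+1}$) and to isolate the exact step at which the argument produces only $\DF\ge 0$ instead of $\DF>0$. When $\lct(X)=\frac{n}{n+1}$, Proposition \ref{prop:first} only gives the non-strict bound $\Sesh(\mathcal{J},(X\times\mathbb{A}^1,\mathcal{O}_{X\times\mathbb{A}^1}(-K_{X\times\mathbb{A}^1})))\le(\frac{n+1}{n})\min_i\{(a_i-b_i+1)/c_i\}$, so the computation at the end of Section \ref{proof} yields only $(\frac{n+1}{n})K_{\mathcal{B}/X\times\mathbb{A}^1}-\Sesh(\mathcal{J},(X\times\mathbb{A}^1,\mathcal{O}_{X\times\mathbb{A}^1}(-K_{X\times\mathbb{A}^1})))\,E\ge 0$, hence $\DF(\mathcal{B},\mathcal{L}(-E))\ge 0$ by Proposition \ref{prop:second}. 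First I would show, combining Proposition \ref{prop:positivity_first_term} with Lemma \ref{eq:positivity_intersection}, that equality $\DF(\mathcal{B},\mathcal{L}(-E))=0$ for a flag ideal $\mathcal{J}\neq(t^N)$ with $\mathcal{B}$ normal can occur only if both terms of the decomposition (\ref{eq:decomposition}) vanish: vanishing of the first term forces $s:=\dim\Supp(\mathcal{O}_{X\times\mathbb{A}^1}/\mathcal{J})=0$, and vanishing of the second forces the effective exceptional divisor $(n+1)rK_{\mathcal{B}/X\times\mathbb{A}^1}-nE$ to pair to zero with $(\overline{\mathcal{L}}-E)^n$; tracing back through the inequalities of Section \ref{proof} one moreover sees that then $b_i=1$ for every $E_i$ and that the bound of Proposition \ref{prop:first} is sharp. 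Thus everything reduces to excluding this configuration unless $X\cong\mathbb{P}^n$.

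The second step exploits the smoothness of $X$ in the reduced situation $s=0$. Here the centre of $\Pi$ is a finite set of closed points on the smooth fibre $X\times\{0\}\subset X\times\mathbb{A}^1$; each such point is a smooth point of the $(n+1)$-dimensional variety $X\times\mathbb{A}^1$, whose minimal discrepancy is $n$, so $a_i=\mathrm{ord}_{E_i}(K_{\mathcal{B}/X\times\mathbb{A}^1})\ge n$ for every exceptional divisor $E_i$. On the other hand, using $\Sesh(\mathcal{J},(X\times\mathbb{A}^1,\mathcal{O}_{X\times\mathbb{A}^1}(-K_{X\times\mathbb{A}^1})))=\min_j\Sesh(I_j,(X,\mathcal{O}_X(-K_X)))$ (cf.\ \cite[Corollary 5.8]{RT07}), the inclusion $I_0\subseteq\mathfrak{m}_{x,X}$ (valid since $\mathcal{J}$ is co-supported at points when $s=0$), monotonicity of Seshadri constants, and \cite[Theorem 1.7]{BS09}, one gets $\Sesh(\mathcal{J},(X\times\mathbb{A}^1,\mathcal{O}_{X\times\mathbb{A}^1}(-K_{X\times\mathbb{A}^1})))\le\Sesh(\mathfrak{m}_{x,X},(X,\mathcal{O}_X(-K_X)))\le n$ whenever $X\not\cong\mathbb{P}^n$. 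Writing $(\frac{n+1}{n})K_{\mathcal{B}/X\times\mathbb{A}^1}-\Sesh(\mathcal{J},\cdot)\,E=\sum_i\bigl(\tfrac{n+1}{n}a_i-\Sesh(\mathcal{J},\cdot)\,c_i\bigr)E_i$, the goal is to combine $a_i\ge n$ with $\Sesh(\mathcal{J},\cdot)\le n$ and improve the inequality to $(\frac{n+1}{n})K_{\mathcal{B}/X\times\mathbb{A}^1}-\Sesh(\mathcal{J},\cdot)\,E>\varepsilon E$ for some $\varepsilon>0$, which by Proposition \ref{prop:second} forces $\DF(\mathcal{B},\mathcal{L}(-E))>0$.

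The hard part will be exactly this last comparison. The two bounds $a_i\ge n$ and $\Sesh(\mathcal{J},\cdot)\le n$ do not by themselves make $\tfrac{n+1}{n}a_i-\Sesh(\mathcal{J},\cdot)\,c_i$ strictly positive term by term, since $c_i=\mathrm{ord}_{E_i}(\mathcal{J})$ can be large; what one must keep track of is that $\Sesh(\mathcal{J},\cdot)$ itself scales down as $\mathcal{J}$ grows deeper at its support points (if $I_j\subseteq\mathfrak{m}_{x,X}^{e_j}$, then by monotonicity $\Sesh(I_j,(X,\mathcal{O}_X(-K_X)))\le\Sesh(\mathfrak{m}_{x,X},(X,\mathcal{O}_X(-K_X)))/e_j\le n/e_j$), so that the product $\Sesh(\mathcal{J},\cdot)\,c_i$ remains dominated by $\tfrac{n+1}{n}a_i$; making this bookkeeping uniform over all flag ideals with zero-dimensional support, and in particular handling the borderline divisors where $a_i=n$ and $b_i=1$ simultaneously, is the real content. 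An alternative and perhaps cleaner route is to bypass the term-by-term estimate and instead strengthen Lemma \ref{eq:positivity_intersection} to the assertion that $(\overline{\mathcal{L}}-E)^n$ pairs strictly positively with \emph{every} nonzero effective exceptional divisor supported in the central fibre; this would immediately give $\DF(\mathcal{B},\mathcal{L}(-E))>0$ once $(n+1)rK_{\mathcal{B}/X\times\mathbb{A}^1}-nE\not\equiv0$, after which one is left only to exclude the identity $(n+1)rK_{\mathcal{B}/X\times\mathbb{A}^1}=nE$, which should again single out $X\cong\mathbb{P}^n$. The $G$-equivariant statement then follows with no new ideas, since $\lct_G(X)$, the Seshadri bound of Proposition \ref{prop:first}, the Donaldson--Futaki formula of Theorem \ref{DF.formula}, and the positivity lemmas are all compatible with the $G$-action; and one may note that for $n\ge 2$ the exceptional case $X\cong\mathbb{P}^n$ does not satisfy the hypothesis at all, since $\lct(\mathbb{P}^n)=\frac{1}{n+1}$, so its presence in the statement only makes the argument independent of that computation.
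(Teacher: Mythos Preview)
The statement you are attempting to prove is labelled a \emph{Conjecture} in the paper; the paper does not prove it. What precedes the conjecture is precisely the heuristic you have reconstructed: in the borderline case $\lct(X)=\frac{n}{n+1}$, Proposition~\ref{prop:positivity_first_term} forces any putative destabilising flag ideal to have $s=\dim\Supp(\mathcal{O}_{X\times\mathbb{A}^1}/\mathcal{J})=0$; then, for smooth $X$, the minimal discrepancy of a smooth point on the $(n{+}1)$-fold $X\times\mathbb{A}^1$ gives $a_i\ge n$ (cf.\ \cite{Amb06}), while $\Sesh(\mathcal{J},(X\times\mathbb{A}^1,\mathcal{O}(-K_{X\times\mathbb{A}^1})))\le\Sesh(\mathfrak{m}_{x,X},(X,\mathcal{O}(-K_X)))\le n$ when $X\not\cong\mathbb{P}^n$ (via \cite[Theorem~1.7]{BS09} and \cite[Lemma~4.7]{PR07}). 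The authors then write that ``it is likely that we could strengthen the inequality~(\ref{lct.girigiri})'' and state the conjecture. Your first two paragraphs are thus a faithful paraphrase of the paper's own motivating discussion, not a new argument.

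The gap you isolate in your third paragraph is exactly the one the paper leaves open, and your proposal does not close it. The bounds $a_i\ge n$ and $\Sesh(\mathcal{J},\cdot)\le n$ do not combine to $\frac{n+1}{n}a_i>\Sesh(\mathcal{J},\cdot)\,c_i$ because $c_i$ is unbounded; your scaling heuristic $\Sesh(I_j,\cdot)\le n/e_j$ when $I_j\subset\mathfrak{m}_x^{e_j}$ does not repair this, since $\Sesh(\mathcal{J},\cdot)=\min_j\Sesh(I_j,\cdot)$ is governed by whichever $I_j$ is shallowest, whereas $c_i=\mathrm{ord}_{E_i}(\mathcal{J})$ can be made large by a single deep $I_0$ without affecting that minimum. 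Your alternative route---upgrading Lemma~\ref{eq:positivity_intersection} to strict positivity of $((\overline{\mathcal{L}}-E)^n.F)$ for \emph{every} nonzero effective exceptional $F$---is a genuine reformulation, but it is not established in the paper (semi-ampleness of $\overline{\mathcal{L}}-E$ gives only $\ge 0$ on components, and equality can occur on divisors contracted by the associated morphism), so invoking it amounts to assuming what has to be proved. In short, your outline correctly maps the terrain and matches the paper's reasoning, but neither you nor the paper supplies a proof; that is why the statement is recorded as a conjecture.
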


Corollary \ref{aut:fin} can be proved by Theorem \ref{Thm:main} and Matsushima's theorem.
The latter says that if a smooth Fano manifold $X$ admits K\"ahler-Einstein metrics, then $\mathrm{Aut}(X)$ is reductive.
\begin{proof}[proof of Corollary \ref{aut:fin}]
From Fact \ref{fact:tian}, $X$ admits K\"ahler-Einstein metrics.
Then, Matsushima's theorem implies that $\mathrm{Aut}(X)$ is reductive.
On the other hand, K-stability in Theorem \ref{Thm:main} implies that $\mathrm{Aut}(X)$ does not admit any non-trivial one-parameter subgroup $\mathbb{G}_m$.
Therefore, $\mathrm{Aut}(X)$ is finite by \cite[Chapter 4, Theorem 2.7]{Mil05}.
\end{proof}

The proofs of Theorem \ref{Thm:main_g} and Corollary \ref{aut:ss} are parallel to those of Theorem \ref{Thm:main} and of Corollary \ref{aut:fin}.  
In fact, from a $G$-equivariant test configuration $(\mathcal{X},\mathcal{L})$, we obtain a $G$-invariant flag ideal $\mathcal{J}=\sum I_{i}t^{i}$ whose blow up gives a resolution of indeterminacy of the natural rational map $\mathcal{X}\dashrightarrow X\times\mathbb{A}^{1}$. 
By interpreting $\lct((X\times \mathbb{A}^{1}, X\times \{0\});I_{0})$ as log canonical threshold for the corresponding sublinear system of $\mid -mK_{X} \mid$ by \cite[Example 9.2.23]{Laz04}, we obtain the upper bound for $\Sesh(I_0,(X,\mathcal{O}_{X}(-K_X)))$ similarly. 
The proof of Corollary \ref{aut:ss} uses the fact that a reductive algebraic group whose 
center does not have any nontrivial one parameter subgroup is semisimple (cf.  \cite[Chapter 1, Theorem 17.10]{Mil05}). 
We leave the detail to the reader. 

We note that we can also prove Corollary \ref{aut:ss} analytically. 
Let us fix a $G$-invariant K\"ahler-Einstein metric $\omega_{KE}$ and consider 
$$
F:=\{g\in \Aut^{0}(X)\mid (g^{-1})^{*}\omega_{KE} \mbox{ is $G$-invariant }\} \subset \Aut^{0}(X).  
$$
From the estimate by Tian, we know that the set of $G$-invariant K\"ahler-Einstein metrics is compact. Furthermore, $\Aut^{0}(X)$ acts transitively on the set 
of all K\"ahler-Einstein metrics, due to \cite{BM85}, and the isotropy of the  
action is compact (cf.\ \cite{Mat57}). Therefore, we conclude that $F$ is also compact. 

On the other hand, $F$ should contain the center $Z$ of $\Aut^{0}(X)$ which as a closed subset. On the other hand, its identity component 
is isomorphic to an algebraic torus (cf.\ \cite[Chapter1, Theorem 17.10]{Mil05}). 
Therefore, $Z$ should be discrete and we end the proof.








\end{document}